\documentclass{article}
%
%
\usepackage{graphicx}
\usepackage{times}
\usepackage{amsthm}
\usepackage{amsmath}
\usepackage{amssymb}
\usepackage{mathrsfs}
\usepackage{pb-diagram}
\usepackage{color}
\usepackage[colorlinks]{hyperref}
%
%
%

\renewcommand{\o}{\omega}

\renewcommand{\th}{\theta}

\renewcommand{\O}{\Omega}
\newcommand{\N}{\mathbb N}
\newcommand{\T}{\mathbb T}

\newcommand{\X}{\mathcal X}
\newcommand{\Y}{\mathcal Y}
\newcommand{\Z}{\mathcal Z}

\renewcommand{\H}{\mathcal H}

\newcommand{\R}{\mathbb R}

\newcommand{\W}{\mathscr W}

\renewcommand{\S}{\mathcal S}
\newcommand{\e}{\sf e}
\newcommand{\g}{\mathfrak g}
\newcommand{\im}{\mathop{\mathsf{Im}}\nolimits}
\newcommand{\re}{\mathop{\mathsf{Re}}\nolimits}

\renewcommand{\[}{\left[}

\renewcommand{\]}{\right]}

\newcommand{\G}{{\sf G}}
\newcommand{\wG}{\widehat{\sf{G}}}
\newcommand{\p}{\parallel}
\newcommand{\<}{\langle}
\renewcommand{\>}{\rangle}




\newtheorem{Theorem}{Theorem}[section]
\newtheorem{Remark}[Theorem]{Remark}
\newtheorem{Lemma}[Theorem]{Lemma}

\newtheorem{Corollary}[Theorem]{Corollary}
\newtheorem{Proposition}[Theorem]{Proposition}
\newtheorem{Definition}[Theorem]{Definition}
\newtheorem{Example}[Theorem]{Example}

\begin{document}

\title{Berezin-Type Operators on the\\ Cotangent Bundle of a Nilpotent Group \thanks{M. M. has been
 supported by the Fondecyt Project No. 1160359.}
}


\author{M. Mantoiu \footnote{\textbf{Key Words:} nilpotent group; Lie algebra; coherent states; pseudo-differential operator; symbol; Berezin quantization.}
\medskip
\footnote{\bf Mathematics Subject Classification: Primary 22E25; 47G30; Secundary 22E45; 46L65.}}

\maketitle \vspace{-1cm}

\bigskip
\bigskip
{\bf Address}

\medskip
Departamento de Matem\'aticas, Universidad de Chile,

Las Palmeras 3425, Casilla 653, Santiago, Chile

\emph{E-mail:} mantoiu@uchile.cl

\bigskip

\maketitle

\begin{abstract}
We define and study coherent states, a Berezin-Toeplitz  quantization and covariant symbols on the product
 $\Xi:=\G\times\g^\sharp$ between a connected simply connected nilpotent Lie group and the dual of its
 Lie algebra. The starting point is a Weyl system codifying the natural Canonical Commutation Relations
 of the system. The formalism is meant to complement the quantization of the cotangent bundle 
$T^\sharp\G\cong\G\times\g^\sharp$ by pseudo-differential operators, to which it is connected in an 
explicit way. Some extensions are indicated, concerning $\tau$-quantizations and variable magnetic fields.
\end{abstract}

\section*{Introduction}\label{duci}

Trying to assign global pseudo-differential operators to large classes of locally compact groups
 $\G$\,, in \cite{MR} second countable, type $I$ unimodular groups have been treated, using 
operator-valued symbols defined on the product $\G\times\wG$\,, where the unitary dual $\wG$ is the 
family of equivalence classes of irreducible representations of $\G$\,. Much more can be said in 
particular cases: (a) for compact Lie groups \cite{RT} and (b) for graded nilpotent Lie groups \cite{FR1}. Actually the number of papers treating these two particular classes in great detail is growing fast. Keeping the same general framework as in \cite{MR}, in \cite{M} the related Berezin-type quantization has been explored. The operator-valuedness of the symbols and the fact that different irreducible representations act in different Hilbert spaces made the theory technically challenging.

\smallskip
In particular cases at least, one hopes for a simpler-looking quantization in terms of scalar-valued 
symbols. This is possible for connected simply connected nilpotent groups, due to some special properties,
 allowing finally to define a well-behaved Fourier transformation from functions (or distributions) 
defined on $\G$ to function (or distributions) defined on $\g^\sharp$, the dual of the Lie algebra
 $\g$\,. There is a drawback, however: this Fourier transformation does not intertwine  multiplication
 with convolution (and this is due to the fact that, while being a diffeomorphism, the exponential map 
does not have good algebraic properties).

\smallskip
A graded structure on the Lie  algebra surely helps. We mention some previous works \cite{BKG,Glo1,Glo2,Glo3,Ma1,Ma2,Me,Mi,Mi1},
 mainly dedicated to particular types of nilpotent groups or to invariant symbols (depending only 
on $\xi\in\g^\sharp$). In \cite{MR,MR1} quantization formulas for the general nilpotent case has been
 mentioned and connections with the operator-valued calculus on $\G\times\wG$ have been indicated. 
Actually the connection consists in combining together two different partial Fourier transformations. 
In the case of groups having (generic) square integrable irreducible representations modulo the center, 
the connection becomes nice and effective, involving Kirillov's theory and the Weyl-Pedersen 
pseudo-differential calculus on coadjoint orbits \cite{MR1}.

\smallskip
Anyhow, it is natural to introduce and study the generalization of the Berezin-Toeplitz 
(also called anti-Wick) formalism in the setting of the phase-space
 $\Xi:=\G\times\g^\sharp\cong T^\sharp\G$\,. The vector group $\G=\R^n$ is a guiding particular case. 
In spite of the mentioned connection between the pseudo-differential quantizations on 
$\G\times\wG$ and $\G\times\g^\sharp$, via a composition of partial Fourier transformations, 
{\it the Berezin-Toeplitz formalisms on the two "phase spaces" are not equivalent}. The reason is that
 their (weak) definitions involve in both cases products of {\it two} Fourier-Wigner functions 
(see \eqref{viespe} for instance), and one of the two Fourier transformations behaves badly with 
respect to multiplication. So there is no isomorphism between the objects from the present article 
and the analog ones from \cite{M}.

\smallskip
After fixing in section \ref{bogart} some notations and conventions about groups and Hilbert spaces, 
in section \ref{firica} we proceed to describe the basic operators acting in $\H:=L^2(\G)$ that will 
be the building blocks of our theory. They can be understood as global or as infinitesimal operations
verifying the Canonical Commutation Relations inherent to the pair $(\G,\g^\sharp)$\,.

\smallskip
From such building blocks, in section \ref{firicra} we construct the Weyl system, a highly non-commutative
 version of the usual one (phase-space shifts) in $\R^n$.
Due to the complexity of the Canonical Commutation Relations, it is not even a projective 
representation of the group $\G\times\g^\sharp$. So one cannot invoke directly results and techniques
 from the existing theory in group-form. The "matrix coefficients" of this Weyl system lead to a 
Fourier-Wigner transform, coherent states, the Bargmann transform, reproducing kernel Hilbert spaces, etc. 
(We use a certain terminology, especially by analogy with the $\R^n$-case; but even in this commutative 
case there are so many different denominations. So we do not expect all the readers to be satisfied with
 our choices.)

\smallskip
In section \ref{firton} one defines the Berezin quantization and study its basic properties. It is 
positive-preserving, it sends $L^p$ spaces of symbols into Schatten-von Neumann classes of order $p$ 
on $L^2(\G)$ and gets a Toeplitz form in the Bargmann representations. Some simple examples are included. 
Other more refined results are postponed to a future publication, mainly because they need first to 
establish a suitable coorbit (and modulation space) theory on $\G\times\g^\sharp$\,. 

\smallskip
The matrix elements of a bounded operator between coherent states define the covariant (lower) symbol. 
It is studied in section 5.  Among others, it provides some lower bounds for certain Schatten-von Neumann 
norms. Kernels of regular operators may be expressed in terms of the covariant symbols and the coherent 
states. Hopefully, this will be used in a future paper to prove a Beals-type criterion for pseudo-differential 
operators with scalar-valued symbols on $T^\sharp\G$\,.

\smallskip
Then we compute the pseudo-differential symbol of a Berezin operator; the correspondence is no longer 
given by a convolution, as in the standard case.

\smallskip
In a final section, we briefly indicate two extensions. First we treat $\tau$-quantizations related to 
ordering issues. We show how this may be implemented at the level of the basic objects. Then we describe
 what happens when a variable magnetic field is also present. For pseudo-differential operators this has
 been done in \cite{BM}. Here we put into evidence the changes needed in the Berezin theory.

\smallskip
Up to our knowledge, the results in this article are not contained in the existing literature. In 
particular, 
projective group representation methods do not apply. However, the constructions and proofs are inspired 
by other, different situations. We were mainly guided by the book \cite{Wo}. The related but not 
isomorphic theory from \cite{M} has also been valuable. The literature on coherent states, Berezin type 
(or localization) operators and related topics is huge; we only cite some references \cite{AAG,BBR,BOW,CGr,CR,GMP,Ha,TB,Wo,Wo1}. As said above, modulation spaces will
 be studied in such a framework subsequently and this will bring to our attention the expanding literature
 on time-frequency methods. Actions in $L^p$-spaces will also be investigated.

\section{Framework}\label{bogart}

The scalar products in a Hilbert space are linear in the first variable. For a given (complex, separable) 
Hilbert space $\H$, one denotes by $\mathbb B(\H)$ the $C^*$-algebra of all linear bounded operators in 
$\H$ and by $\mathbb B^p(\H)$ the bi-sided $^*$-ideal of all Schatten-von Neumann operators of exponent 
$p\ge 1$\,. In particular $\mathbb K(\H)\equiv\mathbb B^\infty(\H)$ is the $C^*$-algebra of all the 
compact operators in $\H$\,. The unitary elements of $\mathbb B(\H)$ form the group $\mathbb U(\H)$\,.

\smallskip
Let $\G$ be a connected simply connected nilpotent Lie group with unit $\e$\,, Haar measure $dx$ and 
unitary dual $\wG$\,. 
Let $\mathfrak g$ be the Lie algebra of $\G$ and $\mathfrak g^\sharp$ its dual. If $X\in\g$ and $\xi\in\g^\sharp$ we 
set $\<X\!\mid\!\xi\>\!:=\xi(X)$\,. We also denote by $\exp:\mathfrak g\to\G$ the exponential map, which is a diffeomorphism. Its inverse is denoted by $\log:\G\rightarrow\mathfrak g$\,. Under these diffeomorphisms the Haar measure on $\G$ corresponds to a Haar measure $dX$ on $\g$ (normalized accordingly).  For each $p\in[1,\infty]$\,, one has an isomorphism 
$$
L^p(\G)\overset{{\rm Exp}}{\longrightarrow} L^p(\mathfrak g)\,,\ {\rm Exp}(u):=u\circ\exp$$ 
with inverse 
$$
L^p(\mathfrak g)\overset{{\rm Log}}{\longrightarrow} L^p(\G)\,,\ {\rm Log}(\nu):=\nu\circ\log\,.
$$ 
The Schwartz spaces $\S(\G)$ and $\S(\g)$ are defined as in \cite[A.2]{CG}; they are isomorphic
 Fr\'echet spaces.

\smallskip
For $X,Y\in\mathfrak g$ we set 
\begin{equation*}\label{diosdado}
\begin{aligned}
X\bullet Y&:=\log[\exp(X)\exp(Y)]\\
&=X+Y+\frac{1}{2}[X,Y]+\frac{1}{12}\big([X,[X,Y]]+[Y,[Y,X]]\big)+\dots
\end{aligned}
\end{equation*}
It is a group composition law on $\mathfrak g$\,, given by a polynomial expression in $X,Y$ (the Baker-Campbel-Hausdorff formula). 
The unit element is $0$ and $X^{\bullet}\equiv -X$ is the inverse of $X$ with respect to $\bullet$\,.

\smallskip
There is a Fourier transformation, given by the duality $\big(\g,\g^\sharp\big)$\,, defined essentially  by
\begin{equation*}\label{clata}
\big(\mathcal F h\big)(\xi):=\int_{\g}e^{-i\<X\mid \xi\>} h(X)\,dX.
\end{equation*}
It is a linear topological isomorphism $\,\mathcal F:\mathcal S(\g)\rightarrow\mathcal S(\g^\sharp)$ and 
a unitary map $\,\mathcal F:L^2(\g)\rightarrow L^2(\g^\sharp)$\,.  
 Composing with the isomorphisms ${\rm Exp}$ and ${\rm Log}$ one gets Fourier transformations
\begin{equation*}\label{fericire}
\mathscr F:=\mathcal F\circ{\rm Exp}:\S(\G)\rightarrow \S(\g^\sharp)\,,\quad \mathscr F^{-1}\!:={\rm Log}\circ\mathcal F^{-1}\!:\S(\g^\sharp)\rightarrow \S(\G)\,,
\end{equation*}
\begin{equation*}\label{qlata}
(\mathscr F u)(\xi)=\int_{\g}e^{-i\<X\mid \xi\>}u(\exp X)dX=\int_\G e^{-i\<\log x\mid \xi\>} u(x)dx\,,
\end{equation*}
\begin{equation*}\label{qluta}
\big(\mathscr F^{-1}w\big)(x)=\int_{\g^\sharp}\!e^{i\<\log x\mid \xi\>}w(\xi)d\xi\,.
\end{equation*}
These maps also define unitary isomorphisms of the corresponding $L^2$-spaces.

\section{Canonical commutation relations}\label{firica}

One has the (strongly continuous) unitary representation ${\rm M}:(\g^\sharp,+)\to\mathbb U\big[L^2(\G)\big]$ given by
\begin{equation*}\label{generez}
[{\rm M}_\zeta(u)](x):=e^{i\<\log x\mid\zeta\>}u(x)\,.
\end{equation*}
If we denote by ${\sf Mult}(\psi)$ the operator of multiplication by functions $\psi$ defined on $\G$\,, 
one has 
\begin{equation*}\label{diverse}
{\sf M}_\zeta={\sf Mult}(\varepsilon_\zeta)={\sf Mult}\big(e^{i\lambda_\zeta}\big)\,,
\end{equation*} 
where we introduced the function
\begin{equation*}\label{lambda}
\lambda_\zeta:\G\to\R\,,\quad\lambda_\zeta(x):=\<\log x\!\mid\!\zeta\>
\end{equation*}
and its imaginary exponential
\begin{equation}\label{epsilon}
\varepsilon_\zeta:\G\to\T\subset\mathbb C\,,\quad\varepsilon_\zeta(x):=e^{i\<\log x\mid\zeta\>}.
\end{equation}
For each $\zeta\in\g^\sharp$ one has a $1$-parameter subgroup 
\begin{equation*}\label{equation}
\R\ni t\to{\sf M}_{t\zeta}=e^{it\Lambda_\zeta}\in\mathbb U\big[L^2(\G)\big]\,,
\end{equation*} 
with infinitesimal generator
\begin{equation*}\label{incapatanare}
\Lambda_\zeta:={\sf Mult}\big(\lambda_\zeta\big)={\sf Mult}(\<\log (\cdot)\!\mid\!\zeta\>)\,.
\end{equation*}

One also has {\it the left and the right unitary representations} 
$$
{\sf L,R}:(\G,\cdot)\to\mathbb U\big[L^2(\G)\big]\,,
$$ 
defined by
\begin{equation*}\label{leftright}
\big[{\sf L}_z(u)\big](x):=u\big(z^{-1}x\big)\,,\quad\big[{\sf R}_z(u)\big](x):=u(xz)\,.
\end{equation*}
For fixed $Z\in\g$\,, there are $1$-parameter subgroups
\begin{equation*}\label{thesubgrup}
\R\ni t\to {\sf L}_{\exp(tZ)}=e^{it(iD^{\sf L}_Z)},\quad\R\ni t\to {\sf R}_{\exp(tZ)}=e^{it(-iD^{\sf R}_Z)},
\end{equation*}
where
$$
\big[D^{\sf L}_Z(u)\big](x):=\frac{d}{dt}\Big\vert_{t=0}u\big(\exp[tZ]x\big)\,,\quad\big[D^{\sf R}_Z(u)\big](x):=\frac{d}{dt}\Big\vert_{t=0}u\big(x\exp[tZ]\big)\,.
$$

Note the "multiplication relations"
\begin{equation*}\label{CCR}
{\sf L}_y{\sf L}_z={\sf L}_{yz}\,,\quad {\sf M}_\eta {\sf M}_\zeta={\sf M}_{\eta+\zeta}\,,\quad {\sf L}_z {\sf M}_\zeta=e^{i\<\log(z^{-1}\cdot)-\log(\cdot)\mid\zeta\>}{\sf M}_\zeta {\sf L}_z\,.
\end{equation*}
and the "commutation relations" (on the Schwartz space $\S(\G)$, for instance)
\begin{equation*}\label{maiusoare}
\big[D^{\sf L}_Y,D^{\sf L}_Z\big]=D^{\sf L}_{[Y,Z]}\,,\quad\big[D^{\sf R}_Y,D^{\sf R}_Z\big]=D^{\sf R}_{[Y,Z]}\,,
\end{equation*}
\begin{equation}\label{maigrele}
\big[D^{\sf L}_Z,\Lambda_\zeta\big]={\sf Mult}\big(D^{\sf L}_Z\lambda_\zeta\big)\,,\quad\big[D^{\sf R}_Z,\Lambda_\zeta\big]={\sf Mult}\big(D^{\sf R}_Z\lambda_\zeta\big)\,.
\end{equation}
For concreteness, let us set ${\sf ad}_X(Z):=[X,Z]$ and compute (in the BCH formula, only the terms that are linear in $tZ$ contribute):
\begin{equation}\label{ce-oiesi}
\begin{aligned}
\big(D^{\sf L}_Z\lambda_\zeta\big)(\exp X)&=\frac{d}{dt}\Big\vert_{t=0}\<\log (\exp[tZ]\exp X)\!\mid\!\zeta\>\\
&=\frac{d}{dt}\Big\vert_{t=0}\<\,[tZ]\bullet X\!\mid\!\zeta\>\\
&=\Big\<Z-\frac{1}{2}{\sf ad}_X(Z)+\frac{1}{12}{\sf ad}^2_X(Z)+\dots\,\big\vert\,\zeta\Big\>\,.
\end{aligned}
\end{equation}
The sum is finite. Let us define {\it the infinitesimal coadjoint action}
\begin{equation*}\label{coi}
\gamma:\G\to{\sf Aut}(\g^\sharp)\,,\quad\gamma_x(\zeta)\equiv{\sf ad}_{-\log x}^\sharp(\zeta):=\zeta\circ{\sf ad}_{-\log x}\,.
\end{equation*}
Then \eqref{ce-oiesi} may be rewritten
\begin{equation*}\label{ceoiesi}
\big(D^{\sf L}_Z\lambda_\zeta\big)(x)=\Big\<Z\,\big\vert\,\zeta+\frac{1}{2}\gamma_x(\zeta)+\frac{1}{12}\gamma_x^2(\zeta)+\dots\Big\>\,.
\end{equation*}
This is a function of $x$\,, which becomes a constant $\<Z\!\mid\!\zeta\>$ precisely when the 
group $\G$ is Abelian. There is a similar formula for $D^{\sf R}_Z\lambda_\zeta$\,.

\section{Weyl systems, the Fourier-Wigner transform and coherent states}\label{firicra}

\begin{Definition}\label{sigmund}
For $(z,\zeta)\in\G\times\g^\sharp$ one defines a unitary operator ${\sf W}(z,\zeta):={\sf M}_\zeta{\sf L}_z$ in $L^2(\G)$ by
\begin{equation}\label{friedar}
\[{\sf W}(z,\zeta)u\]\!(x):=e^{i\<\log x\mid \zeta\>}u(z^{-1}x)\,,
\end{equation}
with adjoint
\begin{equation*}\label{frigider}
\[{\sf W}(z,\zeta)^*u\]\!(y):=e^{-i\<\log(zy)\mid \zeta\>}u(zy)\,.
\end{equation*}
\end{Definition}

This extends the notion of {\it Weyl system} (or {\it time-frequency shifts}) from the case $\G=\R^n$.
These operators also act as isomorphisms of the Schwartz space $\S(\G)$ and can be extended to 
isomorphisms of the space $\S'(\G)$ of tempered distributions. Note that they also define isometries in any $L^p(\G)$ space.

\begin{Lemma}\label{calcul}
For $(z,\zeta),(y,\eta)\in\G\times\g^\sharp$ one has
\begin{equation*}\label{fulppe}
{\sf W}(z,\zeta)\,{\sf W}(y,\eta)=\Gamma\big[(z,\zeta),(y,\eta)\big]\,{\sf W}(zy,\zeta+\eta)\,,
\end{equation*}
where $\Gamma\big[(z,\zeta),(y,\eta)\big]$ is the operator of multiplication by the function
\begin{equation*}\label{sireata}
x\mapsto\gamma\big[(z,\zeta),(y,\eta);x\big]=\exp\big\{-i\<\,\log x-\log(z^{-1}x)\mid \eta\,\>\big\}\,.
\end{equation*}
\end{Lemma}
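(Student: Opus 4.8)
The plan is to verify the identity by a direct computation on a test function $u\in\S(\G)$, reading off the multiplier at the end. First I would apply the inner operator using \eqref{friedar}, namely $[{\sf W}(y,\eta)u](x)=e^{i\<\log x\mid\eta\>}u(y^{-1}x)$. Then I would feed this into ${\sf W}(z,\zeta)$, again by \eqref{friedar}: the rule is to multiply by the outer phase $e^{i\<\log x\mid\zeta\>}$ and to replace the argument $x$ by $z^{-1}x$ inside the previously obtained function. This produces
\begin{equation*}
\big[{\sf W}(z,\zeta){\sf W}(y,\eta)u\big](x)=e^{i\<\log x\mid\zeta\>}\,e^{i\<\log(z^{-1}x)\mid\eta\>}\,u\big(y^{-1}z^{-1}x\big)\,.
\end{equation*}

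The two bookkeeping points to get right are the argument of $u$ and the placement of the phases. For the argument, I would use $y^{-1}z^{-1}=(zy)^{-1}$, so that $u(y^{-1}z^{-1}x)=u((zy)^{-1}x)$, which matches the translation part of ${\sf W}(zy,\zeta+\eta)$. For the phases, I would compare with $[{\sf W}(zy,\zeta+\eta)u](x)=e^{i\<\log x\mid\zeta\>}e^{i\<\log x\mid\eta\>}u((zy)^{-1}x)$: the factor $e^{i\<\log x\mid\zeta\>}$ is already present, while the remaining discrepancy is $e^{i\<\log(z^{-1}x)\mid\eta\>}/e^{i\<\log x\mid\eta\>}=e^{-i\<\log x-\log(z^{-1}x)\mid\eta\>}$, which is exactly the claimed $\gamma[(z,\zeta),(y,\eta);x]$. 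Since this factor depends only on $x$, it acts as the multiplication operator $\Gamma[(z,\zeta),(y,\eta)]$, which finishes the proof.

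Alternatively, and perhaps more transparently, I would derive the same formula algebraically from the multiplication relations recorded just before the lemma. Writing ${\sf W}(z,\zeta)={\sf M}_\zeta{\sf L}_z$, one commutes ${\sf L}_z$ past ${\sf M}_\eta$ via ${\sf L}_z{\sf M}_\eta=e^{i\<\log(z^{-1}\cdot)-\log(\cdot)\mid\eta\>}{\sf M}_\eta{\sf L}_z$, then uses that ${\sf M}_\zeta$ commutes with the emerging scalar multiplication operator, and finally collapses ${\sf M}_\zeta{\sf M}_\eta={\sf M}_{\zeta+\eta}$ together with ${\sf L}_z{\sf L}_y={\sf L}_{zy}$. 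The multiplier that survives is again $\gamma[(z,\zeta),(y,\eta);\cdot]$. The only mild subtlety in either route is keeping track of the order of operations and recording that $\Gamma$ is a genuine multiplication operator rather than a scalar, which reflects the non-commutativity of $\G$; when $\G$ is Abelian the exponent degenerates to the familiar constant cocycle of the $\R^n$ time-frequency calculus. There is no real obstacle here beyond this careful tracking.
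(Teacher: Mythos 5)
Your proof is correct: both the direct evaluation on a test function and the algebraic route via ${\sf W}(z,\zeta)={\sf M}_\zeta{\sf L}_z$ and the multiplication relations yield exactly the multiplier $\exp\big\{-i\<\log x-\log(z^{-1}x)\mid\eta\>\big\}$ claimed in the Lemma. The paper states this Lemma without proof, treating it as an immediate computation from Definition \ref{sigmund}, and your verification is precisely that intended computation.
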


Thus the Weyl system is very far from being a projective representation.

\begin{Definition}\label{wigner}
For $\,u,v\in\H:=L^2(\G)$ one sets $\mathscr W_{u,v}\equiv\mathscr W_{u\otimes\overline v}:\G\times\g^\sharp\to\mathbb C$ by
\begin{equation}\label{vigner}
\mathscr W_{u,v}(z,\zeta):=\<{\sf W}(z,\zeta)u,v\>=\int_\G e^{i\<\log y\mid\zeta\>}u(z^{-1}y)\overline{v(y)}dy\,.
\end{equation}
and call it {\rm the Fourier-Wigner transform}.
\end{Definition}

\begin{Lemma}\label{startortog}
The Fourier-Wigner transform extends to a unitary map 
\begin{equation*}\label{unicra}
\mathscr W\!:\H\otimes\overline\H\cong L^2(\G\times\G)\to L^2(\G\times\g^\sharp)\,.
\end{equation*}
It also defines isomorphisms
\begin{equation*}\label{treica}
\mathscr W\!:\S(\G)\,\overline\otimes\,\S(\G)\cong\S(\G\times\G)\to \S(\G\times\g^\sharp)\,.
\end{equation*}
\begin{equation*}\label{patrica}
\mathscr W\!:\S'(\G)\,\overline\otimes\,\S'(\G)\cong\S'(\G\times\G)\to \S'(\G\times\g^\sharp)\,.
\end{equation*}
\end{Lemma}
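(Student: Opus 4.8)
The plan is to factor $\mathscr W$ as a composition of two manifestly unitary maps: a measure-preserving change of variables on $L^2(\G\times\G)$, followed by a partial Fourier transform in the second variable. Unfolding \eqref{vigner} on a product vector $F:=u\otimes\overline v$ gives
\begin{equation*}
\mathscr W_{u\otimes\overline v}(z,\zeta)=\int_\G e^{i\langle\log y\mid\zeta\rangle}u(z^{-1}y)\overline{v(y)}\,dy=\int_\G e^{i\langle\log y\mid\zeta\rangle}\big(\mathcal T F\big)(z,y)\,dy\,,
\end{equation*}
where $\mathcal T$ is defined on $L^2(\G\times\G)$ by $(\mathcal T F)(z,y):=F(z^{-1}y,y)$\,. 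The inner integral is precisely the partial transform $\mathrm{id}\otimes\check{\mathscr F}$ applied to $\mathcal T F$\,, where $\check{\mathscr F}$ is the variant of $\mathscr F$ carrying the opposite sign, i.e. $\check{\mathscr F}(h)(\zeta)=\mathscr F(h)(-\zeta)$\,. Thus on the dense subspace spanned by product vectors one has $\mathscr W=(\mathrm{id}\otimes\check{\mathscr F})\circ\mathcal T$\,, and it suffices to prove that each factor is unitary, respectively an isomorphism of the Schwartz and distribution spaces.

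First I would check that $\mathcal T$ is unitary. Since $\G$ is nilpotent, hence unimodular, for fixed $y$ the substitution $x=z^{-1}y$ (the composition of the inversion $z\mapsto z^{-1}$ with right translation by $y$) preserves the Haar measure, so $\int_\G|F(z^{-1}y,y)|^2dz=\int_\G|F(x,y)|^2dx$\,; integrating in $y$ yields $\|\mathcal T F\|=\|F\|$ on $L^2(\G\times\G)$. The map is invertible with $(\mathcal T^{-1}G)(x,y)=G(yx^{-1},y)$\,, so $\mathcal T$ is unitary. The text already records that $\mathscr F:L^2(\G)\to L^2(\g^\sharp)$ is unitary; composing with the reflection $\zeta\mapsto-\zeta$\,, a measure-preserving linear bijection of $\g^\sharp$\,, shows $\check{\mathscr F}$ is unitary, hence so is $\mathrm{id}\otimes\check{\mathscr F}:L^2(\G\times\G)\to L^2(\G\times\g^\sharp)$. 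A composition of unitaries being unitary, $\mathscr W$ extends by density from product vectors to a unitary map $L^2(\G\times\G)\to L^2(\G\times\g^\sharp)$.

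For the Schwartz statement I would pass to the exponential coordinates of Section \ref{bogart}. There $\mathcal T$ becomes composition with the map $(Z,Y)\mapsto\big((-Z)\bullet Y,\,Y\big)$ on $\g\times\g\cong\R^{2n}$. By the Baker-Campbell-Hausdorff formula this is a polynomial diffeomorphism whose inverse $(X,Y)\mapsto\big(Y\bullet(-X),\,Y\big)$ is again polynomial, so it maps $\S(\G\times\G)$ onto itself as a topological isomorphism; and $\mathrm{id}\otimes\check{\mathscr F}$ is an isomorphism $\S(\G\times\G)\to\S(\G\times\g^\sharp)$ because a partial Fourier transform is. Their composition gives the asserted Schwartz isomorphism. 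The distribution statement then follows by transposition: the $L^2$-unitarity together with the Schwartz isomorphism shows that the formal adjoint of $\mathscr W$ again maps Schwartz functions continuously to Schwartz functions, so $\mathscr W$ extends to an isomorphism $\S'(\G\times\G)\to\S'(\G\times\g^\sharp)$ by duality.

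The one step requiring genuine care is the claim that composition with the BCH diffeomorphism preserves $\S$\,. What makes it work is that the inverse is again polynomial: from $|w|=|\phi^{-1}(\phi(w))|\lesssim(1+|\phi(w)|)^N$ one extracts a lower bound $|\phi(w)|\gtrsim|w|^{1/N}-1$\,, which forces $f\circ\phi$ to decay faster than any polynomial whenever $f$ does, while the chain rule writes the derivatives of $f\circ\phi$ as rapidly decaying functions times polynomials in $w$\,. This is exactly the stability under polynomial coordinate changes that the definition of $\S(\G)$ in \cite{CG} is built to guarantee. I expect the verification of the mapping properties of $\mathcal T$ on the Schwartz and distribution spaces, rather than the $L^2$ unitarity, to be the main technical point.
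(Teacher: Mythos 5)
Your proposal is correct and is essentially the paper's own proof: the paper factors $\W=({\rm id}\otimes\mathscr F^{-1})\circ{\rm C}$ with the change of variables ${\rm C}(x,y)=(x^{-1}y,y)$, which is exactly your $({\rm id}\otimes\check{\mathscr F})\circ\mathcal T$ (the paper's $\mathscr F^{-1}$ being, up to its abuse of notation, the sign-reversed transform you call $\check{\mathscr F}$). The only difference is that the paper declares the remaining verifications easy, while you carry them out: unimodularity for the $L^2$ step, the polynomial BCH diffeomorphism with polynomial inverse for the Schwartz step, and duality for distributions.
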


\begin{proof}
The map $\W=({\rm id}\otimes\mathscr F^{-1})\circ{\rm C}$ is composed of a partial Fourier transform and 
a unitary change of variables $(x,y)\to{\rm C}(x,y):=\big(x^{-1}y,y\big)$, and this leads easy to a proof
 of all the assertions. 
\end{proof}

In particular, one has {\it the orthogonality relations}:
\begin{equation}\label{orthog}
\big\<\mathscr W_{u,v},\mathscr W_{u'\!,v'}\big\>_{L^2(\G\times\g^\sharp)}=\<u,u'\>\<v',v\>\,.
\end{equation}

\begin{Definition}\label{coeres}
For some fixed $L^2$-normalized $\o\in\S(\G)\subset L^2(\G)\equiv\H$ and for  every $(z,\zeta)\in\G\times\g^\sharp$, we define {\rm the coherent state} $\,\o_{z,\zeta}\!:={\sf W}(z,\zeta)^*\o$\,; explicitly
\begin{equation*}\label{eltita}
\o_{z,\zeta}(x)=e^{-i\<\log(zx)\mid \zeta\>}\o(zx)\,.
\end{equation*} 
\end{Definition}

The associated rank one projector is given by
\begin{equation}\label{rankone}
{\O_{z,\zeta}}(u):=\big\<u,\o_{z,\zeta}\big\>\,\o_{z,\zeta}=\mathscr W_{u,\o}(z,\zeta)\,\o_{z,\zeta}\,,\quad\forall\,u\in\H\,.
\end{equation}
It is an integral operator with kernel $\varpi_{z,\zeta}:=\o_{z,\zeta}\otimes\overline{\o_{z,\zeta}}\in S(\G\times\G)$ (or in $L^2(\G\times\G)$ more generally for $\o$ in $L^2(\G)$)\,. 

\smallskip
{\it The canonical (or modulation) mapping} associated to the vector $\o$ (or {\it the generalized 
Bargmann transformation}) 
\begin{equation*}\label{metrie}
\mathscr B_{\o}:\H\to L^2(\G\times\g^\sharp)\,,\quad\mathscr B_{\o}(u):=\mathscr W_{u,\o}
\end{equation*} 
is an isometry with adjoint
\begin{equation*}\label{cumetrie}
\mathscr B_{\o}^\dag:L^2(\G\times\g^\sharp)\to \H\,,\quad\mathscr B^\dag_{\o}(h):=\int_\G\int_{\g^\sharp}h(z,\zeta)\,\o_{z,\zeta}\,dzd\zeta\,.
\end{equation*} 
The isometry condition may be seen as {\it an inversion formula}:
\begin{equation}\label{invform}
u=\int_\G\int_{\g^\sharp}\big\<u,\o_{z,\zeta}\big\>\,\o_{z,\zeta}\,dzd\zeta\,.
\end{equation}
The final projection $\mathscr P_{\o}\!:=\mathscr B_{\o}\mathscr B_{\o}^\dag$ is an integral operator with kernel 
\begin{equation}\label{adica}
p_{\o}(z,\zeta;z',\zeta'):=\big\<\o_{z,\zeta},\o_{z'\!,\zeta'}\big\>\,.
\end{equation}
One also have {\it the reproducing formula} $\mathscr B_{\o}(u)=\big(\mathscr B_{\o}\mathscr B^{\dag}_{\o}\mathscr B_{\o}\big)(u)=\mathscr P_{\o}\!\[\mathscr B_{\o}(u)\]$\,, i.e.
\begin{equation*}\label{tauras}
\[\mathscr B_{\o}(u)\](x,\xi)=\int_\G\int_{\g^\sharp} \<\o_{x,\xi},\o_{z,\zeta}\>\[\mathscr B_{\o}(u)\]\!(z,\zeta)\,dzd\zeta\,.
\end{equation*}
Thus $\mathscr P_{\o}\!\[L^2(\G\times\g^\sharp)\]$ is a reproducing kernel Hilbert space with 
reproducing kernel $p_{\o}$\,,
composed of bounded continuous functions on $\G\times\g^\sharp$.

\section{The Berezin quantization}\label{firton}

Occasionally, we are going to use notations as $\X\!:=(x,\xi),\Y\!:=(y,\eta),\Z\!:=(z,\zeta)\in\Xi\!:=\G\times\g^\sharp$, with product measure $d\X:=dxd\xi$\,. Actually, both types of notations will be used alternatively.
We denote by $\<\cdot,\cdot\>_{(\Xi)}$ both the $L^2(\Xi)$-inner product and the various related 
duality forms (as $\S(\Xi)\times\S'(\Xi)\to\mathbb C$ for example). The precise meaning will be specified 
or will be obvious from the context.

\begin{Definition}\label{uwe}
Let $\,\o\in \S(\G)$ be a fixed $L^2$-normalized vector. We define formally the operator in $L^2(\G)$
\begin{equation}\label{albina}
{\sf Ber}_\o(f):=\int_\G\int_{\g^\sharp}\!f(x,\xi)\,\O_{x,\xi}\,dxd\xi=\int_\Xi\!f(\X)\,\O_{\X}\,d\X,
\end{equation}
where $\O_\X$ is defined in \eqref{rankone}, and call it {\rm the Berezin operator associated to the symbol $f$ and the vector $\o$}\,.
\end{Definition}

This should be taken in weak sense: taking \eqref{vigner} and \eqref{rankone} into account, for 
any $u,v\in L^2(\G)$ one gets
\begin{equation}\label{viespe}
\begin{aligned}
\big\<{\sf Ber}_\o(f)u,v\big\>:=&\int_\G\int_{\g^\sharp}\!f(x,\xi)\,\big\<\O_{x,\xi}(u),v\big\>\,dxd\xi\\
=&\int_\G\int_{\g^\sharp}f(x,\xi)\,\W_{u,\o}(x,\xi)\,\overline{\W_{v,\o}(x,\xi)}\,dxd\xi\\
=&\,\big\<f,\overline{\W_{u,\o}}\,\W_{v,\o}\big\>_{(\Xi)}\,.
\end{aligned}
\end{equation}
This allows many different (but compatible) interpretations, based on the properties of the Fourier-Wigner transform. For instance, if $u,v\in\S(\G)$\,, the last term of \eqref{viespe} makes sense for $f\in\S'(\G\times\g^\sharp)$ and one gets a linear continuous operator 
\begin{equation*}\label{cofilae}
{\sf Ber}_\o(f):\S(\G)\to\S'(\G)\,,\quad f\in\S'(\G\times\g^\sharp)\,. 
\end{equation*}
For similar reasons, ${\sf Ber}_\o(f):\S'(\G)\to\S(\G)$ is well-defined, linear and continuous if $f\in\S(\G\times\g^\sharp)$\,.
If  $u,v\in L^2(\G)$\,, then $\W_{u,\o},\W_{v,\o}\in L^2(\Xi)$\,, thus $\W_{u,\o}\overline{\W_{v,\o}}\in L^1(\Xi)$ and one gets
\begin{equation*}\label{caibalae}
{\sf Ber}_\o(f)\in\mathbb B\big[L^2(\G)\big]\,,\quad f\in L^\infty(\G\times\g^\sharp)\,. 
\end{equation*}
It is obvious that ${\sf Ber}_\o(f)^*\!={\sf Ber}_\o(\overline f)$ and that {\it ${\sf Ber}_\o(f)$ is a 
positive operator in $L^2(\G)$ if  $\,f\in L^\infty(\Xi)$ is (almost everywhere) positive}. By the 
orthogonality relations \eqref{orthog} one may write
$$
\big\<{\sf Ber}_\o(1)u,v\big\>=\int_{\Xi}\!\W_{v,\o}(\X)^*\,\W_{u,\o}(\X)d\X=\big\<\W_{u,\o}(\X),\W_{v,\o}(\X)\big\>_{(\Xi)}\!=\<u,v\>\,,
$$
implying that ${\sf Ber}_\o(1)=1_{L^2(\G)}$\,. 

\smallskip
We gather other important properties of the Berezin operators in connection with the Schatten-von Neumann classes in the next result. 

\begin{Theorem}\label{bondar}
For every $s\in[1,\infty]$ one has a linear bounded map ${\sf Ber}_\o:L^{s}(\Xi)\to\mathbb B^s\big[L^2(\G)\big]$ satisfying
\begin{equation}\label{aceea}
\p\!{\sf Ber}_\o(f)\!\p_{\mathbb B^s[L^2(\G)]}\,\le\;4^{1/s}\p\!f\!\p_{L^s(\Xi)}.
\end{equation} 
In particular, if $\,f\in L^{1}(\Xi)$\,, then ${\sf Ber}_\o(f)$ is a trace-class operator with
\begin{equation*}\label{musca}
{\rm Tr}\big[{\sf Ber}_\o(f)\big]=\int_\G\int_{\g^\sharp}f(x,\xi)\,dxd\xi\,.
\end{equation*}
\end{Theorem}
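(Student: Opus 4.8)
The plan is to prove the two endpoint cases $s=\infty$ and $s=1$ by hand and then fill in the whole range $1<s<\infty$ by complex interpolation, using the classical identifications $[L^1(\Xi),L^\infty(\Xi)]_\theta=L^s(\Xi)$ and $[\mathbb B^1(\H),\mathbb B^\infty(\H)]_\theta=\mathbb B^s(\H)$ with $1/s=1-\theta$. The weak formula \eqref{viespe} already defines ${\sf Ber}_\o(f)$ directly for $f\in L^1(\Xi)+L^\infty(\Xi)$, so ${\sf Ber}_\o$ is a single linear map on the Banach couple, and the complex (Calder\'on) interpolation theorem will yield the bound with constant $C_1^{1-\theta}C_\infty^{\theta}$ out of endpoint constants $C_1$ (at $s=1$) and $C_\infty$ (at $s=\infty$). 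The stated $4^{1/s}$ is exactly what one obtains from $C_1=4$ and $C_\infty=1$, since $4^{1-\theta}=4^{1/s}$.

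For the endpoint $s=\infty$ I would only reread the discussion preceding the statement: from \eqref{viespe} one has $|\langle{\sf Ber}_\o(f)u,v\rangle|\le\|f\|_{L^\infty}\int_\Xi|\W_{u,\o}|\,|\W_{v,\o}|\,d\X$, and Cauchy--Schwarz together with the isometry property of the Bargmann map (so that $\|\W_{u,\o}\|_{L^2(\Xi)}=\|u\|$ and $\|\W_{v,\o}\|_{L^2(\Xi)}=\|v\|$, a consequence of the orthogonality relations \eqref{orthog}) gives $|\langle{\sf Ber}_\o(f)u,v\rangle|\le\|f\|_{L^\infty}\|u\|\,\|v\|$. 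Hence $\|{\sf Ber}_\o(f)\|_{\mathbb B(\H)}\le\|f\|_{L^\infty}$, i.e. $C_\infty=1$.

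For the endpoint $s=1$ I would first treat a nonnegative symbol $g\in L^1(\Xi)$: then ${\sf Ber}_\o(g)$ is a positive operator, so its trace-norm equals its trace. Choosing an orthonormal basis $\{e_n\}$ of $\H$ and specializing \eqref{viespe} to $u=v=e_n$, one gets $\mathrm{Tr}\,{\sf Ber}_\o(g)=\big\langle g,\sum_n|\W_{e_n,\o}|^2\big\rangle_{(\Xi)}$, where the interchange of sum and integral is legitimate by positivity (monotone convergence). Now $\sum_n|\W_{e_n,\o}(\X)|^2=\|{\sf W}(\X)^*\o\|^2=1$ pointwise, by Parseval and the unitarity of ${\sf W}(\X)$, whence $\|{\sf Ber}_\o(g)\|_{\mathbb B^1}=\int_\Xi g=\|g\|_{L^1}$. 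For general complex $f$ I would split $f=f_1-f_2+i(f_3-f_4)$ into its four nonnegative parts, each dominated by $|f|$, and use the triangle inequality in $\mathbb B^1$ to obtain $\|{\sf Ber}_\o(f)\|_{\mathbb B^1}\le\sum_{j=1}^4\|f_j\|_{L^1}\le 4\|f\|_{L^1}$, so $C_1=4$. The same pointwise identity $\sum_n|\W_{e_n,\o}|^2=1$ also yields, by linearity, the trace formula $\mathrm{Tr}\,{\sf Ber}_\o(f)=\int_\Xi f$ for every $f\in L^1(\Xi)$.

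The interpolation step itself is then routine. The points requiring care — rather than a genuine obstacle — are: justifying the trace computation rigorously (which is why I isolate the nonnegative case first, so monotone convergence applies), and invoking the standard but nontrivial description of the complex interpolation spaces of the Schatten ideals, $[\mathbb B^1(\H),\mathbb B^\infty(\H)]_\theta=\mathbb B^s(\H)$. One should also confirm that ${\sf Ber}_\o$ is consistently defined across the scale, which follows from the density of $\S(\Xi)$ in $L^s(\Xi)$ for $s<\infty$ and the compatibility of the weak definition \eqref{viespe} on $L^1(\Xi)+L^\infty(\Xi)$.
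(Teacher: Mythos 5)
Your proposal is correct and follows essentially the same route as the paper's own proof: the identical $s=\infty$ endpoint via Cauchy--Schwarz and the orthogonality relations \eqref{orthog}, the identical $s=1$ endpoint (positive symbols first, trace computed against an orthonormal basis using Parseval, then the four-part decomposition of $f$ yielding the constant $4$), and the same complex interpolation step between the two endpoints. The only differences are cosmetic: you make explicit the monotone-convergence justification and the consistency of the weak definition on $L^1(\Xi)+L^\infty(\Xi)$, points the paper leaves implicit.
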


\begin{proof}
For $s=\infty$ we write using the definitions, an obvious $L^1-L^\infty$ estimate, the Cauchy-Schwartz 
inequality and the orthogonality relation
$$
\begin{aligned}
\p\!{\sf Ber}_\o(f)\!\p_{\mathbb B[L^2(\G)]}&=\sup_{\p u\p=1=\p v\p}\big\vert\big\<{\sf Ber}_\o(f)u,v\big\>\big\vert\\
&=\sup_{\p u\p=1=\p v\p}\big\vert\big\<f,\overline{\W_{u,\o}}\,\W_{v,\o}\big\>_{(\Xi)}\big\vert\\
&\le\;\p\!f\!\p_{L^{\infty}}\!\sup_{\p u\p=1=\p v\p}\!\p\!\overline{\W_{u,w}}\,\W_{v,\o}\!\p_{L^{1}}\\
&\le\;\p\!f\!\p_{L^{\infty}}\sup_{\p u\p=1}\!\p\!\W_{u,\o}\!\p_{L^{2}}\!\sup_{\p v\p=1}\!\p\!\W_{v,\o}\!\p_{L^{2}}\\
&=\;\p\!f\!\p_{L^{\infty}}.
\end{aligned}
$$
There is a version of the computation above showing that ${\sf Ber}_\o(f)$ is in fact also bounded 
if $f\in L^{s}(\Xi)$\,. It is based on complex interpolation, the H\"older inequality and improved 
properties of the Fourier-Wigner transformation, having as starting point the simple estimate
$$
|\mathscr W_{u,v}(\X)|=|\<{\sf W}(\X)u,v\>|\le\,\p\!u\!\p\p\!v\!\p\,,\quad\forall\ \X\in\Xi\,.
$$
But one needs the finer result \eqref{aceea}, in terms of Schatten-von Neumann classes.

\smallskip
We deal first with the trace class properties of the Berezin operator, assuming that $f\in L^1(\Xi)$ is 
positive\,. The connected simply connected nilpotent group $\G$ is second countable, so the Hilbert 
space $L^2(\G)$ is separable.  If $\{w_k\}_{k\in\N}$ is an orthonormal basis in $L^2(\G)$\,, one has 
by \eqref{viespe}, \eqref{rankone} and the Parseval identity 
$$
\begin{aligned}
{\rm Tr}\big[{\sf Ber}_\o(f)\big]&=\sum_k\big\<{\sf Ber}_\o(f)w_k,w_k\big\>\\
&=\sum_k\int_{\Xi}\!f(\X)\,\big\<\O_{\X}(w_k),w_k\big\>\,d\X\\
&=\sum_k\int_{\Xi}\!f(\X)\,\big\<\o_\X,w_k\big\>\,\big\<w_k,\o_\X\big\> \,d\X\\
&=\int_{\Xi}\!f(\X)\,\sum_k\,\big\<\o_\X,w_k\big\>\,\big\<w_k,\o_\X\big\>\,d\X\\
&=\int_{\Xi}\!f(\X)\,\big\<\o_\X,\o_\X\big\> \,d\X\\
&=\int_{\Xi}\!f(\X)\,d\X\,.
\end{aligned}
$$
If $f$ is positive, we already know that ${\sf Ber}_\o(f)$ is also positive and its trace norm is 
computed above:
$$
\p\!{\sf Ber}_\o(f)\!\p_{\mathbb B^1[L^2(\G)]}\,={\rm Tr}\big[{\sf Ber}_\o(f)\big]=\,\p\!f\!\p_{L^{1}}.
$$
One obtains the $s=1$ case of \eqref{aceea} for general $f$ by writing $f=\re[f]_+-\re[f]_-+i\im[f]_+-i\im[f]_-\,.$

\smallskip
The general case in \eqref{aceea} then follows by interpolation of order $\th=1/s$ from the cases $s=1$ 
and $s=\infty$\,, because 
one has $\Big[L^{\infty}(\Xi),L^{1}(\Xi)\Big]_{1/s}\!=L^{s}(\Xi)$ and $\Big[\mathbb B\big[L^2(\G)],\mathbb B^{1}\big[L^2(\G)\big]\Big]_{1/s}\!=\mathbb B^{s}\big[L^2(\G)\big]$\,.
\end{proof}

One can improve the constant $4^{1/s}$ to $1$\, by imitating arguments from \cite[Ch.\,14]{Wo} or 
from \cite{CT}.

\smallskip
Besides the compactness results following directly from Theorem \ref{bondar}, one also gets by 
approximation

\begin{Corollary}\label{cevine}
If $f\in C_0(\G\times\g^\sharp)$\,, then ${\sf Ber}_\o(f)$ is a compact operator in $L^2(\G)$\,.
\end{Corollary}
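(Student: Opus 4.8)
The plan is to deduce compactness from Theorem~\ref{bondar} by a density argument.  The key point is that $\mathbb B^1\big[L^2(\G)\big]$ consists of compact operators, and that the set $C_c(\Xi)$ of compactly supported continuous functions on $\Xi=\G\times\g^\sharp$ sits inside every $L^s(\Xi)$, in particular inside $L^1(\Xi)\cap L^\infty(\Xi)$.  So for $f\in C_c(\Xi)$ the operator ${\sf Ber}_\o(f)$ is already trace-class, hence compact.  First I would record this observation; the work is then entirely in passing from $C_c(\Xi)$ to $C_0(\G\times\g^\sharp)$.

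Second, I would invoke the fact that $C_c(\Xi)$ is dense in $C_0(\Xi)$ for the uniform norm $\p\cdot\p_{L^\infty}$: given $f\in C_0(\Xi)$ and $\epsilon>0$, there is $g\in C_c(\Xi)$ with $\p f-g\p_{L^\infty}<\epsilon$ (this is the standard characterization of $C_0$ as the uniform closure of the compactly supported continuous functions).  By the $s=\infty$ case of the estimate \eqref{aceea} we then have
\begin{equation*}
\p\!{\sf Ber}_\o(f)-{\sf Ber}_\o(g)\!\p_{\mathbb B[L^2(\G)]}=\p\!{\sf Ber}_\o(f-g)\!\p_{\mathbb B[L^2(\G)]}\le\p\!f-g\!\p_{L^\infty}<\epsilon,
\end{equation*}
using the linearity of ${\sf Ber}_\o$.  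Thus ${\sf Ber}_\o(f)$ is a uniform (operator-norm) limit of the compact operators ${\sf Ber}_\o(g)$.

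Finally I would conclude using the fact that $\mathbb K\big[L^2(\G)\big]$ is norm-closed in $\mathbb B\big[L^2(\G)\big]$: a $\p\cdot\p_{\mathbb B}$-limit of compact operators is compact.  Since each ${\sf Ber}_\o(g)$ with $g\in C_c(\Xi)$ is trace-class and therefore compact, and ${\sf Ber}_\o(f)$ is their operator-norm limit, ${\sf Ber}_\o(f)$ lies in $\mathbb K\big[L^2(\G)\big]$.  I do not anticipate a genuine obstacle here; the only point requiring a little care is the choice of norm in the approximation — one must approximate in $\p\cdot\p_{L^\infty}$ (so that the $s=\infty$ bound applies) rather than in some $L^s$-norm, since elements of $C_0(\Xi)$ need not be integrable.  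Everything else is the routine ``dense subset of compact operators plus closedness of $\mathbb K$'' mechanism.
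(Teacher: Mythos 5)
Your proposal is correct and is exactly the paper's argument: compactly supported continuous symbols give trace-class (hence compact) operators by Theorem~\ref{bondar}, and the general case follows by uniform approximation combined with the $s=\infty$ case of \eqref{aceea} and the norm-closedness of the compact operators. The paper states this in one sentence; your write-up merely fills in the same steps explicitly, including the correct observation that the approximation must be in the $L^\infty$-norm.
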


\begin{proof}
This is true for continuous compactly supported functions, by the result above, and then follows for 
every continuous function small at infinity, by uniform approximation and the case $s=\infty$ of 
\eqref{aceea}.
\end{proof}

\begin{Example}\label{deosebire}
{\rm Theorem \ref{bondar} supplies plenty of compact Berezin operators with symbols not belonging to
 $L^\infty(\Xi)$\,. In addition, we have $\O_{\X}={\sf Ber}_\o(\delta_{\X})$ (if \eqref{albina} seems 
too formal, one can easily compute with \eqref{viespe}), and this is a rank one projection defined by 
a distribution.
}
\end{Example}

\begin{Example}\label{deosebure}
{\rm  For $f\!:=\varphi\otimes 1$\,, where $\varphi:\G\to\mathbb C$\,, a short computation shows 
that ${\sf Ber}_\o(\varphi\otimes 1)$ is the operator of multiplication by the function $x\to\big(\check{\varphi}\star|\o|^2\big)(x):=\int_\G\varphi(z)|\o(zx)|^2 dz$\,, where $\check{\varphi}(y):=\varphi(y^{-1})$.
}
\end{Example}

\begin{Example}\label{deosebare}
{\rm  For $f\!:=1\otimes\psi$\,, where $\psi:\g^\sharp\to\mathbb C$\,, a short computation shows 
that ${\sf Ber}_\o(1\otimes\psi)$ is an integral operator with kernel 
\begin{equation*}\label{knukleul}
\big[h_\o(\psi)\big](x,y):=\int_\G \tilde\psi\,[\log(zx)-\log(zy)]\,\o(zx)\overline{\o(zy)}\,dz\,,
\end{equation*}
written in terms of the usual Fourier transform $\tilde\psi$ of $\psi$ attached to the duality 
$(\g,\g^\sharp)$\,.
}
\end{Example}

\begin{Proposition}\label{crooks}
For any $z\in\G$ and (for example) $f\in L^\infty(\G\times\g^\sharp)$\,, one has
\begin{equation*}\label{cuvar}
{\sf L}_z^*{\sf Ber}_\o(f){\sf L}_z={\sf Ber}_\o\big[f(\cdot z^{-1}\!,\cdot)\big]\,.
\end{equation*}
\end{Proposition}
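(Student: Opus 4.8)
The plan is to reduce the claim to the weak definition \eqref{viespe} of the Berezin operator and to exploit the behaviour of the Fourier-Wigner transform under the left regular representation. First I would fix $u,v\in L^2(\G)$ and move the two unitaries onto the test vectors,
$$
\big\<{\sf L}_z^*{\sf Ber}_\o(f){\sf L}_z u,v\big\>=\big\<{\sf Ber}_\o(f)\,{\sf L}_z u,{\sf L}_z v\big\>\,,
$$
and then apply \eqref{viespe} to the translated vectors ${\sf L}_z u$ and ${\sf L}_z v$, which rewrites the right-hand side as
$$
\int_\G\int_{\g^\sharp}f(y,\eta)\,\W_{{\sf L}_z u,\o}(y,\eta)\,\overline{\W_{{\sf L}_z v,\o}(y,\eta)}\,dy\,d\eta\,.
$$

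The decisive step is a covariance identity for $\W$ under left translations. Using ${\sf W}(y,\eta)={\sf M}_\eta{\sf L}_y$ together with the multiplication relation ${\sf L}_y{\sf L}_z={\sf L}_{yz}$, I would compute
$$
\W_{{\sf L}_z u,\o}(y,\eta)=\big\<{\sf M}_\eta{\sf L}_y{\sf L}_z u,\o\big\>=\big\<{\sf M}_\eta{\sf L}_{yz}u,\o\big\>=\big\<{\sf W}(yz,\eta)u,\o\big\>=\W_{u,\o}(yz,\eta)\,,
$$
and the same identity with $v$ in place of $u$. Substituting these two relations into the integral above turns it into
$$
\int_\G\int_{\g^\sharp}f(y,\eta)\,\W_{u,\o}(yz,\eta)\,\overline{\W_{v,\o}(yz,\eta)}\,dy\,d\eta\,.
$$

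Finally I would change variables in the $\G$-integral by setting $w=yz$, so that $y=wz^{-1}$ and both Fourier-Wigner factors are evaluated at $w$. Because $\G$, being a connected simply connected nilpotent Lie group, is unimodular, its Haar measure is right invariant and $dy=dw$; only the symbol is affected, $f(y,\eta)=f(wz^{-1},\eta)$, so the integral becomes
$$
\int_\G\int_{\g^\sharp}f(wz^{-1},\eta)\,\W_{u,\o}(w,\eta)\,\overline{\W_{v,\o}(w,\eta)}\,dw\,d\eta=\big\<{\sf Ber}_\o\!\big[f(\cdot\,z^{-1},\cdot)\big]u,v\big\>\,,
$$
which is exactly the asserted identity since $u,v$ are arbitrary. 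The only genuine points to verify are the covariance identity and the unimodularity underpinning the change of variables; everything else is bookkeeping, all the pairings being absolutely convergent because the orthogonality relations \eqref{orthog} place $\W_{u,\o},\W_{v,\o}$ in $L^2(\Xi)$ and hence their product in $L^1(\Xi)$, which pairs with $f\in L^\infty(\Xi)$. The same computation extends verbatim to the distributional pairings of Lemma \ref{startortog}, covering the remaining mapping regimes of ${\sf Ber}_\o$.
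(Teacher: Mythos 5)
Your proof is correct and follows essentially the same route as the paper's: the decisive identity ${\sf W}(y,\eta)\,{\sf L}_z={\sf W}(yz,\eta)$ (coming from ${\sf L}_y{\sf L}_z={\sf L}_{yz}$ and the ordering ${\sf W}(y,\eta)={\sf M}_\eta{\sf L}_y$), followed by the right-translation change of variables $w=yz$ legitimized by unimodularity of the nilpotent group $\G$. The only difference is presentational: the paper conjugates the rank-one projectors ${\O}_{x,\xi}$ by ${\sf L}_z$ inside the operator-valued integral, obtaining ${\sf L}_z^*{\O}_{x,\xi}{\sf L}_z={\O}_{xz,\xi}$, whereas you express the same covariance through the weak pairing \eqref{viespe} and the Fourier-Wigner transform, $\W_{{\sf L}_z u,\o}(y,\eta)=\W_{u,\o}(yz,\eta)$ -- two views of one and the same computation.
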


\begin{proof}
One computes
$$
\begin{aligned}
{\sf L}_z^*{\sf Ber}_\o(f){\sf L}_z u&=\int_\G\int_{\g^\sharp}f(x,\xi)\,{\sf L}_z^*\O_{x,\xi}{\sf L}_z u\,dxd\xi\\
&=\int_\G\int_{\g^\sharp}f(x,\xi)\,\big\<u,{\sf L}_z^*{\sf W}(x,\xi)^*\o\big\>{\sf L}_z^*{\sf W}(x,\xi)^*\o\,dxd\xi\\
&=\int_\G\int_{\g^\sharp}f(x,\xi)\,\big\<u,{\sf W}(xz,\xi)^*\o\big\>{\sf W}(xz,\xi)^*\o\,dxd\xi\\
&=\int_\G\int_{\g^\sharp}f(x,\xi)\,\O_{xz,\xi}(u)\,dxd\xi\,,
\end{aligned}
$$
and then a change of variables leads to the result.
\end{proof}

The formula for ${\sf W}(z,\zeta)^*{\sf Ber}_\o(f){\sf W}(z,\zeta)$ is rather involved, due to 
Lemma \ref{calcul}.

\smallskip
We provide now a Toeplitz-like form of the operator ${\sf T\!p}_\o(f)\!:=\mathscr B_{\o}\circ{\sf Ber}_\o(f)\circ\mathscr B_{\o}^{\dag}$ living in $L^2\big(\G\times\g^\sharp\big)$\,.

\begin{Proposition}\label{carabus}
One has
\begin{equation}\label{ragace}
{\sf T\!p}_{\o}(f)=\mathscr P_\o\circ{\sf Mult}(f)\circ \mathscr P_\o\,,
\end{equation}
where ${\sf Mult}(f)$ is the point-wise multiplication by $f\in L^\infty(\Xi)$\,.
\end{Proposition}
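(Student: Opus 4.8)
The plan is to prove \eqref{ragace} weakly, as an identity of sesquilinear forms on $L^2(\Xi)\times L^2(\Xi)$, reducing everything to the already-established formula \eqref{viespe}. Fix $f\in L^\infty(\Xi)$, so that ${\sf Ber}_\o(f)\in\mathbb B\big[L^2(\G)\big]$ and all of $\mathscr B_\o,\mathscr B_\o^\dag,{\sf Mult}(f),\mathscr P_\o$ are bounded; then it suffices to show that both sides of \eqref{ragace} have the same form $\big\<\,\cdot\,h,h'\big\>_{(\Xi)}$. For $h,h'\in L^2(\Xi)$ I would first use that $\mathscr B_\o^\dag$ is the Hilbert adjoint of the isometry $\mathscr B_\o$ to push the outer maps back into $\H$:
$$
\big\<{\sf T\!p}_\o(f)h,h'\big\>_{(\Xi)}=\big\<\mathscr B_\o\,{\sf Ber}_\o(f)\,\mathscr B_\o^\dag h,h'\big\>_{(\Xi)}=\big\<{\sf Ber}_\o(f)\,\mathscr B_\o^\dag h,\mathscr B_\o^\dag h'\big\>\,,
$$
so that the computation is transferred to the two vectors $u:=\mathscr B_\o^\dag h$ and $v:=\mathscr B_\o^\dag h'$ in $L^2(\G)$.

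The decisive step is to recognize the Fourier-Wigner functions that \eqref{viespe} produces. Since $\mathscr B_\o u=\mathscr W_{u,\o}$ by definition and $\mathscr P_\o=\mathscr B_\o\mathscr B_\o^\dag$, one has
$$
\mathscr W_{u,\o}=\mathscr B_\o\mathscr B_\o^\dag h=\mathscr P_\o h\,,\qquad\mathscr W_{v,\o}=\mathscr B_\o\mathscr B_\o^\dag h'=\mathscr P_\o h'\,.
$$
Substituting $u,v$ into \eqref{viespe} then gives
$$
\big\<{\sf Ber}_\o(f)u,v\big\>=\int_\Xi f(\X)\,\big(\mathscr P_\o h\big)(\X)\,\overline{\big(\mathscr P_\o h'\big)(\X)}\,d\X=\big\<{\sf Mult}(f)\,\mathscr P_\o h,\mathscr P_\o h'\big\>_{(\Xi)}\,,
$$
so the whole expression collapses to a plain pointwise-multiplication pairing between the two projected functions.

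To finish I would invoke the self-adjointness of $\mathscr P_\o$, which is immediate from $\mathscr P_\o=\mathscr B_\o\mathscr B_\o^\dag$ and $\big(\mathscr B_\o^\dag\big)^*=\mathscr B_\o$, in order to move the second projection onto $h'$:
$$
\big\<{\sf Mult}(f)\,\mathscr P_\o h,\mathscr P_\o h'\big\>_{(\Xi)}=\big\<\mathscr P_\o\,{\sf Mult}(f)\,\mathscr P_\o h,h'\big\>_{(\Xi)}\,.
$$
Chaining the three displays shows that ${\sf T\!p}_\o(f)$ and $\mathscr P_\o\circ{\sf Mult}(f)\circ\mathscr P_\o$ induce the same form on $L^2(\Xi)$, hence coincide as bounded operators, which is exactly \eqref{ragace}.

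I do not anticipate a serious obstacle: the argument is essentially a bookkeeping of adjoints resting on the isometry of $\mathscr B_\o$ and on the weak identity \eqref{viespe}. The only point that needs a little care is the function class — taking $f\in L^\infty(\Xi)$ keeps every operator bounded so that the weak identity determines ${\sf T\!p}_\o(f)$ uniquely; for more singular symbols one should instead read \eqref{ragace} as an identity of forms tested on $\S(\G\times\g^\sharp)$ and extend it by the continuity and duality statements recorded after \eqref{viespe}. An equivalent but heavier route would insert the rank-one decomposition \eqref{albina} directly between $\mathscr B_\o$ and $\mathscr B_\o^\dag$ and compute with the reproducing kernel \eqref{adica}; the form-level calculation above is preferable since it never touches the kernel explicitly.
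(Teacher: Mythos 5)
Your proof is correct and follows essentially the same route as the paper: both arguments rest on reading the weak formula \eqref{viespe} as $\big\<{\sf Ber}_\o(f)u,v\big\>=\big\<{\sf Mult}(f)\mathscr B_\o u,\mathscr B_\o v\big\>_{(\Xi)}$ and then doing adjoint bookkeeping. The only cosmetic difference is that the paper proves the equivalent identity ${\sf Ber}_\o(f)=\mathscr B_\o^\dag\circ{\sf Mult}(f)\circ\mathscr B_\o$ by testing against $u,v\in L^2(\G)$, whereas you test \eqref{ragace} directly against $h,h'\in L^2(\Xi)$ and recombine via $\mathscr P_\o=\mathscr B_\o\mathscr B_\o^\dag$ and its self-adjointness.
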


\begin{proof}
Clearly \eqref{ragace} is equivalent to ${\sf Ber}_\o(f)=\mathscr B_\o^{\dag}\!\circ{\sf Mult}(f)\circ\mathscr B_\o$\,. For $u,v\in L^2(\G)$ we have
$$
\begin{aligned}
\big\<{\sf Ber}_{\o}(f)u,v\big\>&=\big\<f,\overline{\mathscr B_\o(u)}\,\mathscr B_\o(v)\big\>_{(\Xi)}\\
&=\int_\Xi\big[\mathscr B_\o(u)\big](\X)f(\X)\overline{\big[\mathscr B_\o(v)\big](\X)}\,d\X\\
&=\int_\Xi\Big({\sf Mult}(f)\big[\mathscr B_\o(u)\big]\Big)(\X)\,\overline{\big[\mathscr B_\o(v)\big](\X)}\,d\X\\
&=\Big\<{\sf Mult}(f)\big[\mathscr B_\o(u)\big],\mathscr B_\o(v)\Big\>_{\!(\Xi)}\\
&=\Big\<\big[\mathscr B_\o^{\dag}\!\circ{\sf Mult}(f)\circ\mathscr B_\o\big]u,v\Big\>_{\!(\Xi)}
\end{aligned}
$$
and the Proposition is proved.
\end{proof}

It follows immediately that ${\sf T\!p}_\o(f)$ is an integral operator with kernel
\begin{equation}\label{notas}
\big[t_\o(f)\big](\X,\Y):=\int_\Xi f(\Z)\<\o_\X,\o_\Z\>\<\o_\Z,\o_\Y\> d\Z\,.
\end{equation}

\section{The covariant symbol and the Berezin transform}\label{firika}

\begin{Definition}\label{tranteste}
{\rm The covariant symbol} ${\rm cov}_\o(T):\Xi\times\Xi\to\mathbb C$ of an operator $T\in\mathbb B\big[L^2(\G)\big]$ is 
\begin{equation*}\label{cavarice}
\big[{\rm cov}_\o(T)\big](\X,\X'):=\big\<T\o_{\X},\o_{\X'}\big\>=\big\<{\sf W}(\X')T{\sf W}(\X)^*\o,\o\big\>\,.
\end{equation*}
For the diagonal version we are also going to use the notation 
\begin{equation*}\label{vidoq}
\big[{\rm Cov}_\o(T)\big](\X):=\big[{\rm cov}_\o(T)\big](\X,\X)={\rm Tr}\big[T\O_\X\big]\,.
\end{equation*}
\end{Definition}

Clearly {\it ${\rm cov}_\o\!:\mathbb B\big[L^2(\G)\big]\to L^\infty(\Xi\times\Xi)$ is a linear
contraction and any ${\rm cov}_\o(T)$ is actually a continuous function}. Under further requirements 
on $\o$\,, it might have further regularity properties. For instance, if $\o\in\S(\G)$ (as we usually 
assume), then ${\rm cov}_\o(T)$ is smooth, with bounded derivatives.

\smallskip
Recall the composition of integral kernels
\begin{equation*}\label{yes}
(F\,\square\,G)(\X,\Y):=\int_\Xi F(\X,\Z)G(\Z,\Y)d\Z
\end{equation*}
and the adjoint
\begin{equation*}\label{manfredmann}
F^\square(\X,\Y)=\overline{F(\Y,\X)}\,.
\end{equation*}

\begin{Proposition}\label{pinkfloyd}
One has
\begin{equation*}\label{jethrotull}
{\rm cov}_\o(ST)={\rm cov}_\o(T)\,\square\,{\rm cov}_\o(S)\,,\quad{\rm cov}_\o(T^*)={\rm cov}_\o(T)^\square.
\end{equation*}
\end{Proposition}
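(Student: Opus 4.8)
The plan is to verify both identities by direct computation from the definition $[{\rm cov}_\o(T)](\X,\X')=\langle T\o_\X,\o_{\X'}\rangle$, treating the adjoint formula first, since it is purely algebraic, and then the composition formula, whose only analytic ingredient is the resolution of the identity supplied by the inversion formula \eqref{invform}.

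For the adjoint I would simply expand
\begin{equation*}
[{\rm cov}_\o(T^*)](\X,\X')=\langle T^*\o_\X,\o_{\X'}\rangle=\langle\o_\X,T\o_{\X'}\rangle=\overline{\langle T\o_{\X'},\o_\X\rangle}=\overline{[{\rm cov}_\o(T)](\X',\X)},
\end{equation*}
using only the defining property of the adjoint ($\langle T^*u,v\rangle=\langle u,Tv\rangle$) and the conjugate symmetry of the inner product, together with our convention that $\langle\cdot,\cdot\rangle$ is linear in the first slot. The right-hand side is exactly $[{\rm cov}_\o(T)]^\square(\X,\X')$, so the second identity is done.

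For the composition, the key observation is that the isometry of the Bargmann map $\mathscr B_\o$, equivalently the inversion formula \eqref{invform}, yields a \emph{weak resolution of the identity}: since $\mathscr W_{u,\o}(\Z)=\langle u,\o_\Z\rangle$ by \eqref{rankone} and $\mathscr B_\o$ is an isometry, one has
\begin{equation*}
\langle u,w\rangle=\big\langle\mathscr B_\o u,\mathscr B_\o w\big\rangle_{(\Xi)}=\int_\Xi\langle u,\o_\Z\rangle\,\langle\o_\Z,w\rangle\,d\Z
\end{equation*}
for all $u,w\in L^2(\G)$. I would then write $[{\rm cov}_\o(ST)](\X,\X')=\langle ST\o_\X,\o_{\X'}\rangle=\langle T\o_\X,S^*\o_{\X'}\rangle$ and apply the resolution of the identity with $u=T\o_\X$ and $w=S^*\o_{\X'}$, giving
\begin{equation*}
\langle T\o_\X,S^*\o_{\X'}\rangle=\int_\Xi\langle T\o_\X,\o_\Z\rangle\,\langle\o_\Z,S^*\o_{\X'}\rangle\,d\Z=\int_\Xi\langle T\o_\X,\o_\Z\rangle\,\langle S\o_\Z,\o_{\X'}\rangle\,d\Z.
\end{equation*}
Recognizing the two factors as $[{\rm cov}_\o(T)](\X,\Z)$ and $[{\rm cov}_\o(S)](\Z,\X')$ produces precisely $[{\rm cov}_\o(T)\,\square\,{\rm cov}_\o(S)](\X,\X')$.

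The computation is essentially forced once one decides to insert the resolution of the identity, so I do not expect a genuine analytic obstacle: phrasing the inversion formula as the weak identity above sidesteps any Bochner-integral or Fubini justification, since it is just the statement $\mathscr B_\o^\dag\mathscr B_\o=1_{L^2(\G)}$, while convergence of all integrals is immediate from $T,S\in\mathbb B\big[L^2(\G)\big]$ and the square-integrability of $\Z\mapsto\mathscr W_{u,\o}(\Z)$. The one point demanding care is bookkeeping: the kernel composition $\square$ reverses the order of $S$ and $T$, so that ${\rm cov}_\o$ is an anti-homomorphism, and this reversal must be matched correctly against the convention that the inner product is linear in its first argument.
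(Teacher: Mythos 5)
Your proposal is correct and follows essentially the same route as the paper: the adjoint identity by conjugate symmetry, and the composition identity by writing $\langle ST\o_\X,\o_{\X'}\rangle=\langle T\o_\X,S^*\o_{\X'}\rangle$ and inserting the resolution of the identity coming from the inversion formula \eqref{invform} (your phrasing via $\mathscr B_\o^\dag\mathscr B_\o=1_{L^2(\G)}$ is just a restatement of that formula). The order reversal and all sign/conjugation conventions are handled exactly as in the paper's proof.
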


\begin{proof}
By using the definitions and the inversion formula \eqref{invform} one gets
$$
\begin{aligned}
\big[{\rm cov}_\o(ST)\big](\X,\Y)&=\big\<T\o_\X,S^*\o_\Y\big\>\\
&=\int_\Xi \big\<T\o_\X,\o_\Z\big\>\big\<\o_\Z,S^*\o_\Y\big\>d\Z\\
&=\int_\Xi \big\<T\o_\X,\o_\Z\big\>\big\<S\o_\Z,\o_\Y\big\>d\Z\\
&=\int_\Xi \big[{\rm cov}_\o(T)\big](\X,\Z)\big[{\rm cov}_\o(S)\big](\Z,\Y)d\Z\\
&=\big[{\rm cov}_\o(T)\,\square\,{\rm cov}_\o(S)\big](\X,\Y)\,.
\end{aligned}
$$
The formula for the adjoint is obvious. Taking diagonal values one gets
\begin{equation*}\label{derrumbe}
{\rm Cov}_\o(T^*)=\overline{{\rm Cov}_\o(T)}\,.
\end{equation*}
\end{proof}

The diagonal covariant symbol provides lower bounds for the operator trace norm.

\begin{Proposition}\label{caceres}
If $\,T\in\mathbb B^1\!\big[L^2(\G)\big]$ then
\begin{equation*}\label{sarcastoc}
\p\!{\rm Cov}_\o(T)\!\p_{L^1(\G)}\,\le\,\p\!T\!\p_{\mathbb B^1}.
\end{equation*}
\end{Proposition}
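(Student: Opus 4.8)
The plan is to reduce the inequality to the orthogonality relations \eqref{orthog} by means of the singular value decomposition of $T$\,. First I would rewrite the diagonal covariant symbol as a matrix coefficient: since $\O_\X$ is the rank one projector $\O_\X(u)=\<u,\o_\X\>\o_\X$\,, one has
$$
\big[{\rm Cov}_\o(T)\big](\X)={\rm Tr}\big[T\O_\X\big]=\big\<T\o_\X,\o_\X\big\>\,.
$$
Being trace-class, $T$ admits a Schmidt expansion $T=\sum_n\lambda_n\<\cdot,\phi_n\>\psi_n$ with $\lambda_n\ge 0$\,, orthonormal systems $\{\phi_n\}$ and $\{\psi_n\}$ in $L^2(\G)$\,, and $\p\!T\!\p_{\mathbb B^1}=\sum_n\lambda_n$\,. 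Substituting this into the formula above gives
$$
\big[{\rm Cov}_\o(T)\big](\X)=\sum_n\lambda_n\,\big\<\o_\X,\phi_n\big\>\,\big\<\psi_n,\o_\X\big\>\,.
$$

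The key step is to recognize each matrix coefficient as a value of the Fourier-Wigner transform. Using $\o_\X={\sf W}(\X)^*\o$ and the unitarity of ${\sf W}(\X)$\,, I would compute $\<\o_\X,\phi_n\>=\<\o,{\sf W}(\X)\phi_n\>=\overline{\mathscr W_{\phi_n,\o}(\X)}$ and $\<\psi_n,\o_\X\>=\<{\sf W}(\X)\psi_n,\o\>=\mathscr W_{\psi_n,\o}(\X)$\,, so that
$$
\big[{\rm Cov}_\o(T)\big](\X)=\sum_n\lambda_n\,\overline{\mathscr W_{\phi_n,\o}(\X)}\,\mathscr W_{\psi_n,\o}(\X)\,.
$$

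Then I would integrate the modulus over $\Xi$\,, bring the absolute value inside the series, exchange sum and integral by Tonelli (all integrands being nonnegative), and estimate each term by the Cauchy-Schwarz inequality in $L^2(\Xi)$:
$$
\int_\Xi\!\big|\mathscr W_{\phi_n,\o}(\X)\big|\,\big|\mathscr W_{\psi_n,\o}(\X)\big|\,d\X\,\le\,\p\!\mathscr W_{\phi_n,\o}\!\p_{L^2(\Xi)}\,\p\!\mathscr W_{\psi_n,\o}\!\p_{L^2(\Xi)}\,.
$$
By the orthogonality relations \eqref{orthog} together with the normalization $\p\!\o\!\p=1$\,, one has $\p\!\mathscr W_{\phi_n,\o}\!\p_{L^2(\Xi)}^2=\<\phi_n,\phi_n\>\<\o,\o\>=1$ and likewise $\p\!\mathscr W_{\psi_n,\o}\!\p_{L^2(\Xi)}=1$\,, so every term is bounded by $\lambda_n$\,. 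Summing the series yields $\int_\Xi\big|{\rm Cov}_\o(T)(\X)\big|\,d\X\le\sum_n\lambda_n=\p\!T\!\p_{\mathbb B^1}$\,, which is the asserted bound.

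I do not expect a serious obstacle: the only delicate point is the legitimacy of interchanging the Schmidt summation with the integration, and passing to absolute values first turns this into a routine application of Tonelli's theorem. The substance of the argument is simply that the diagonal covariant symbol pairs two Fourier-Wigner transforms, each of unit $L^2(\Xi)$-norm by \eqref{orthog}, against the $\ell^1$-summable singular values of $T$\,.
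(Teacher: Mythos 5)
Your proof is correct and follows essentially the same route as the paper: a singular value decomposition of $T$, term-by-term integration, and the fact that each matrix coefficient $\<\o_\X,\varphi_k\>$ has unit $L^2(\Xi)$-norm by the orthogonality relations \eqref{orthog} (equivalently, the inversion formula). The only cosmetic difference is that you control the cross term with the Cauchy--Schwarz inequality in $L^2(\Xi)$ where the paper uses the pointwise bound $|ab|\le\tfrac12(|a|^2+|b|^2)$ before integrating; both give exactly the same estimate.
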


\begin{proof}
The trace-class operator $T$ admits the strongly convergent representation
\begin{equation*}\label{representation}
T=\sum_{k=1}^\infty s_k(T)\<\cdot,\varphi_k\>\psi_k\,,
\end{equation*}
in terms of the (positive) singular values of $T$ and two orthonormal families. 
For every $\X\in\Xi$ we have
$$
\begin{aligned}
\big\vert\big[{\rm Cov}_\o(T)\big](\X)\big\vert&=\big\vert\<T\o_\X,\o_\X\>\big\vert\\
&=\Big\vert\sum_{k=1}^\infty s_k(T)\<\o_\X,\varphi_k\>\<\psi_k,\o_\X\>\Big\vert\\
&\le\frac{1}{2}\sum_{k=1}^\infty s_k(T)\Big(\vert\<\o_\X,\varphi_k\>\vert^2+\vert\<\psi_k,\o_\X\>\vert^2\Big)\,,
\end{aligned}
$$
implying
$$
\begin{aligned}
\p\!{\rm Cov}_\o(T)\!\p_{L^1(\G)}&\le\frac{1}{2}\sum_{k=1}^\infty s_k(T)\,\Big(\int_\Xi\vert\<\o_\X,\varphi_k\>\vert^2d\X+\int_\Xi\vert\<\psi_k,\o_\X\>\vert^2d\X\Big)\,.
\end{aligned}
$$
By the inversion formulas and by the normalization of the vectors, the two integrals equal $1$\,, and 
the remaining factor is the trace norm of the operator.
\end{proof}

By interpolation one readily gets

\begin{Corollary}\label{rodica}
If $\,T\in\mathbb B^p\big[L^2(\G)\big]$\,, with $p\in[1,\infty]$\,, then
\begin{equation*}\label{sarcastroc}
\p\!{\rm Cov}_\o(T)\!\p_{L^p(\G)}\,\le\,\p\!T\!\p_{\mathbb B^p}.
\end{equation*}
\end{Corollary}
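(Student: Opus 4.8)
The plan is to obtain the inequality by complex interpolation between the endpoint cases $p=1$ and $p=\infty$, exactly as suggested, by viewing ${\rm Cov}_\o$ as a single linear map to which an abstract Riesz--Thorin argument applies.

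First I would record the two endpoints. The case $p=1$ is precisely Proposition \ref{caceres}. For the case $p=\infty$ I would use the contraction property already noted after Definition \ref{tranteste}: since $\o_\X={\sf W}(\X)^*\o$ is $L^2$-normalized ($\,{\sf W}(\X)$ being unitary and $\p\o\p=1$), one has for every $\X\in\Xi$
\[
\big\vert[{\rm Cov}_\o(T)](\X)\big\vert=\big\vert\<T\o_\X,\o_\X\>\big\vert\le\p\!T\!\p\,\p\o_\X\p^2=\p\!T\!\p\,,
\]
so that $\p\!{\rm Cov}_\o(T)\!\p_{L^\infty(\Xi)}\le\p\!T\!\p_{\mathbb B^\infty}$. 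This is the $p=\infty$ instance of the claim, again with constant $1$.

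The observation that makes interpolation legitimate is that the map $T\mapsto{\rm Cov}_\o(T)=\big\<T\o_{(\cdot)},\o_{(\cdot)}\big\>$ is \emph{linear} in $T$ (the apparent quadratic dependence involves only the fixed vector $\o$). Hence ${\rm Cov}_\o$ is a single linear map acting on the interpolation couple $\big(\mathbb B[L^2(\G)],\mathbb B^1[L^2(\G)]\big)$, contractive into $L^\infty(\Xi)$ and into $L^1(\Xi)$ respectively. Applying the complex interpolation method at parameter $\th=1/p$, together with the identifications $\big[\,\mathbb B[L^2(\G)],\mathbb B^1[L^2(\G)]\,\big]_{1/p}=\mathbb B^p[L^2(\G)]$ and $\big[\,L^\infty(\Xi),L^1(\Xi)\,\big]_{1/p}=L^p(\Xi)$ already invoked in the proof of Theorem \ref{bondar}, yields a contraction ${\rm Cov}_\o:\mathbb B^p[L^2(\G)]\to L^p(\Xi)$, which is the assertion.

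There is essentially no serious obstacle here: both endpoints hold with constant $1$, and the two interpolation scales are exactly those used for Theorem \ref{bondar}, so the interpolated constant is again $1$. The only point requiring a moment's care is the purely formal one of checking that the linear map ${\rm Cov}_\o$ is admissible for the complex method on the couple, i.e. that it is defined and bounded on $\mathbb B^1\cap\mathbb B=\mathbb B^1$ (which is dense in $\mathbb B^p$ for $p<\infty$) before the interpolation extension is applied; this is immediate from the two endpoint bounds just established.
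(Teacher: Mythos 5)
Your proposal is correct and is essentially the paper's own argument: the paper obtains the Corollary ``by interpolation'' from exactly the two endpoints you use, namely Proposition \ref{caceres} for $p=1$ and the contraction property ${\rm cov}_\o:\mathbb B\big[L^2(\G)\big]\to L^\infty$ noted after Definition \ref{tranteste} for $p=\infty$, using the same interpolation identifications $\big[\,\mathbb B,\mathbb B^1\,\big]_{1/p}=\mathbb B^p$ and $\big[\,L^\infty,L^1\,\big]_{1/p}=L^p$ already invoked in the proof of Theorem \ref{bondar}. Your remarks on the linearity of $T\mapsto{\rm Cov}_\o(T)$ and on admissibility for the complex method merely make explicit details the paper leaves implicit.
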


\begin{Proposition}\label{zandowal}
If $\,T$ is a compact operator, ${\rm Cov}_\o(T)\in C_0(\Xi)$\,, i.\,e.\,it is a continuous function
 converging to zero at infinity.
\end{Proposition}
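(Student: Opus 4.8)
The function ${\rm Cov}_\o(T)$ is already known to be continuous for every bounded $T$, so the only point at issue is the decay at infinity. The plan is to handle first rank-one operators, pass to finite rank by linearity, and then reach general compact operators by a density plus uniform-approximation argument, using the contraction property of ${\rm cov}_\o$ as the engine.

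For a rank-one operator $T=\<\cdot,\varphi\>\psi$ with $\varphi,\psi\in\H$, a direct computation based on $\o_\X={\sf W}(\X)^*\o$ and the adjoint of ${\sf W}(\X)$ gives
\begin{equation*}
\big[{\rm Cov}_\o(T)\big](\X)=\<\o_\X,\varphi\>\<\psi,\o_\X\>=\overline{\W_{\varphi,\o}(\X)}\,\W_{\psi,\o}(\X)\,.
\end{equation*}
Hence I would reduce the whole statement to the single claim that the Fourier-Wigner transform $\W_{u,v}$ of any pair $u,v\in\H$ belongs to $C_0(\Xi)$; indeed $C_0(\Xi)$ is closed under pointwise products, so a product of two such functions again lies there, settling the rank-one case, and finite sums then settle the finite-rank case.

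The heart of the argument, and the step I expect to be the main obstacle, is proving that claim, i.e.\ the decay of $\W_{u,v}$ at infinity. Here I would use that $\W$ maps $\S(\G)\,\overline\otimes\,\S(\G)$ into $\S(\Xi)$ by Lemma \ref{startortog}, so that $\W_{u,v}\in\S(\Xi)\subset C_0(\Xi)$ whenever $u,v\in\S(\G)$. For general $u,v\in\H$ I would choose sequences $u_n,v_n\in\S(\G)$ with $u_n\to u$ and $v_n\to v$ in $\H$. The elementary bound $|\W_{u,v}(\X)|\le\p u\p\p v\p$, valid for all $\X\in\Xi$ (recorded in the proof of Theorem \ref{bondar}), then shows that $\W_{u_n,v_n}\to\W_{u,v}$ uniformly on $\Xi$; since $C_0(\Xi)$ is closed under uniform limits, one concludes $\W_{u,v}\in C_0(\Xi)$.

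Finally, for an arbitrary compact $T$ I would invoke the norm-density of finite-rank operators in $\mathbb K\big[L^2(\G)\big]$, picking finite-rank $T_n\to T$ in operator norm. Since ${\rm cov}_\o$ is a contraction, one has the uniform estimate
\begin{equation*}
\big|\,\big[{\rm Cov}_\o(T_n)\big](\X)-\big[{\rm Cov}_\o(T)\big](\X)\big|=\big|\<(T_n-T)\o_\X,\o_\X\>\big|\le\p T_n-T\p\longrightarrow 0\,.
\end{equation*}
As each ${\rm Cov}_\o(T_n)$ lies in $C_0(\Xi)$ by the previous paragraphs, and this space is uniformly closed, the limit ${\rm Cov}_\o(T)$ lies in $C_0(\Xi)$ as well. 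Everything outside the decay of $\W_{u,v}$ is then routine bookkeeping with density and the uniform bound.
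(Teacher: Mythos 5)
Your proof is correct, but it exploits compactness through a genuinely different mechanism than the paper. The paper never decomposes $T$: it observes that the states ${\sf W}(\X)\o$ (equivalently the coherent states $\o_\X$) tend \emph{weakly} to zero as $\X\to\infty$ --- a fact reduced, via density of $\S(\G)$ in $L^2(\G)$ and unitarity of the Weyl system, to the decay of $\W_{\o,v}$ for Schwartz $v$ --- and then invokes the property that a compact operator turns weak convergence into norm convergence, so that $\vert\<T\o_\X,\o_\X\>\vert\le\,\p T\o_\X\p\,\p\o\p\to 0$. You instead use the other classical characterization of compactness, norm-density of finite-rank operators: via the contraction bound $\vert\<(T_n-T)\o_\X,\o_\X\>\vert\le\p T_n-T\p$ you reduce to the rank-one identity $\big[{\rm Cov}_\o(\<\cdot,\varphi\>\psi)\big](\X)=\overline{\W_{\varphi,\o}(\X)}\,\W_{\psi,\o}(\X)$, and hence to the claim that $\W_{u,v}\in C_0(\Xi)$ for all $u,v\in\H$. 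Both routes rest on the same analytic core, namely Lemma \ref{startortog}: $\W$ is a change of variables composed with a partial Fourier transform, so it maps Schwartz vectors into $\S(\Xi)\subset C_0(\Xi)$, and the elementary bound $\vert\W_{u,v}(\X)\vert\le\p u\p\p v\p$ (with bilinear splitting) extends the decay to all of $\H$ by density. Your version buys two reusable byproducts: the explicit rank-one formula, which is a pointwise analogue of the weak formula \eqref{viespe} defining the Berezin operators, and the Riemann--Lebesgue-type statement $\W_{u,v}\in C_0(\Xi)$ for arbitrary $u,v\in L^2(\G)$. The paper's version buys brevity: a single density argument on the vector side suffices, with no decomposition of the operator and no double approximation (in both the vectors and the operator) as in your argument.
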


\begin{proof}
Continuity has already been mentioned. One still has to show that
\begin{equation*}\label{stillike}
\underset{\X\to\infty}{\lim}\<T{\sf W}(\X)\o,{\sf W}(\X)\o\>=0\,.
\end{equation*}
The operator $T$ being compact, it turns weak convergence into norm convergence. Also using the density of $\S(\G)$ in $L^2(\G)$ and the unitarity of the Weyl system, we are thus reduced to showing that
$$
\W_{\o,v}(\X)=\<{\sf W}(\X)\o,v\>\underset{\X\to\infty}{\longrightarrow}0\,,\quad\forall\,v\in S(\G)\,.
$$
This is obvious from the fact that $\o\in\S(\G)$ and that $\W$ is the composition between a change of 
variables and a partial Fourier transform. If $\o$ is only square integrable, one can still finish the 
proof by density and approximation.   
\end{proof}

\begin{Proposition}\label{cofi}
For every $f\in L^1(\G\times\g^\sharp)$ one has in terms of the kernel \eqref{notas} of the Toeplitz 
operator
\begin{equation}\label{deaceea}
{\rm cov}_\o\big({\sf Ber}_{\o}(f)\big)=t_\o(f)\,,
\end{equation}
with the particular case ({\rm the Berezin transform})
\begin{equation}\label{deacealalta}
\big[{\sf BT}_\o(f)\big](\X):=\big[{\rm Cov}_\o\big({\sf Ber}_{\o}(f)\big)\big](\X)=\int_\Xi\!f(\Z)\,\big\vert\big\<\o_\X,\o_\Z\big\>\big\vert^2 d\Z\,,
\end{equation}
and
\begin{equation}\label{integrasa}
\int_\Xi\big[{\sf BT}_\o(f)\big](\X)\,d\X=\int_\Xi f(\X)\,d\X\,.
\end{equation}
\end{Proposition}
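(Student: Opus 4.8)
The plan is to reduce all three identities to the weak definition \eqref{viespe} of the Berezin operator, the explicit rank-one form of the projectors $\O_\Z$, and the isometry of the Bargmann map $\mathscr B_\o$.

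First I would establish \eqref{deaceea}. Since by definition $\big[{\rm cov}_\o\big({\sf Ber}_\o(f)\big)\big](\X,\X')=\big\<{\sf Ber}_\o(f)\o_\X,\o_{\X'}\big\>$, I would insert the coherent states $u=\o_\X$ and $v=\o_{\X'}$ into \eqref{viespe}. Because $\O_\Z(u)=\<u,\o_\Z\>\o_\Z$, one has $\big\<\O_\Z(\o_\X),\o_{\X'}\big\>=\<\o_\X,\o_\Z\>\<\o_\Z,\o_{\X'}\>$, and substituting this back reproduces precisely the integral defining $t_\o(f)$ in \eqref{notas}. There is no analytic subtlety here: the integral in \eqref{viespe} is absolutely convergent since $f\in L^1(\Xi)$ and each matrix coefficient $\<\o_\X,\o_\Z\>$ is bounded by $1$ (the vector $\o$ is normalized and the Weyl system is unitary).

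The diagonal identity \eqref{deacealalta} is then immediate: restricting \eqref{deaceea} to $\X'=\X$ and using $\<\o_\Z,\o_\X\>=\overline{\<\o_\X,\o_\Z\>}$ turns the product $\<\o_\X,\o_\Z\>\<\o_\Z,\o_\X\>$ into $\big\vert\<\o_\X,\o_\Z\>\big\vert^2$.

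Finally, for the normalization \eqref{integrasa} I would integrate \eqref{deacealalta} in $\X$ and interchange the two integrations. The decisive ingredient is that $\int_\Xi\big\vert\<\o_\X,\o_\Z\>\big\vert^2 d\X=1$ for every fixed $\Z$: this is exactly where the inversion formula \eqref{invform}, i.e.\ the isometry of $\mathscr B_\o$, is used. Indeed $\<u,\o_\X\>=\mathscr W_{u,\o}(\X)$, so the isometry gives $\int_\Xi\big\vert\<u,\o_\X\>\big\vert^2 d\X=\p u\p^2$, and taking $u=\o_\Z={\sf W}(\Z)^*\o$, which has unit norm by unitarity of the Weyl system, yields the value $1$. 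After this the double integral collapses to $\int_\Xi f(\Z)\,d\Z$. The only point needing a little care, and the main (though minor) obstacle, is the Fubini interchange; it is justified because $f\in L^1(\Xi)$ while the inner integral is the constant $1$, so $f(\Z)\big\vert\<\o_\X,\o_\Z\>\big\vert^2$ is absolutely integrable on $\Xi\times\Xi$.
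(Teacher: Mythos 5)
Your proof is correct and takes essentially the same route as the paper: equation \eqref{deaceea} is verified by the same direct substitution of coherent states into \eqref{viespe}, and your derivation of the key identity $\int_\Xi\vert\<\o_\X,\o_\Z\>\vert^2 d\X=1$ from the isometry of $\mathscr B_\o$ is exactly the alternative the paper itself points to (``one may also use \eqref{invform} directly''). Your explicit Tonelli/Fubini justification simply spells out what the paper leaves implicit.
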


\begin{proof}
Checking \eqref{deaceea} (and thus \eqref{deacealalta}) is an easy direct verification. Then proving
 \eqref{integrasa} relies on the formula 
\begin{equation}\label{vormula}
\int_\Xi|\<\o_\X,\o_\Z\>|^2d\X=1\,,\quad\forall\,\Z\in\Xi\,.
\end{equation}
Recalling the kernel \eqref{adica} of the {\it projection} $\mathscr P_{(\o)}$ and the normalization 
of $\o$\,, \eqref{vormula} becomes obvious. One may also use \eqref{invform} directly.
\end{proof}

Let us say that the operator $T\in\mathbb B\big[L^2(\G)\big]$ is {\it regularizing} if it extends to a 
continuous operator $T:\S'(\G)\to\S(\G)$\,; then it will have a kernel belonging to $\S(\G\times\G)$\,. 
This kernel may be expressed in terms of the covariant symbol and the coherent states.

\begin{Proposition}\label{nucleuldansei}
The kernel $K_T:\G\times\G\to\mathbb C$ of the regularizing operator $T$ is given through the formula
\begin{equation*}\label{eciueishn}
K_T(x,y)=\int_\Xi\int_\Xi\big[{\rm cov}_\o(T)\big](\Z,\Z')\,\o_{\Z'}(x)\,\overline{\o_\Z}(y)\,d\Z\,d\Z'.
\end{equation*}
\end{Proposition}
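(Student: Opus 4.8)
The plan is to reconstruct $K_T$ by inserting, on both sides of $T$, the resolution of the identity supplied by the inversion formula \eqref{invform}. That formula says precisely that $\mathscr B_{\o}$ is an isometry, i.e. $\mathscr B^\dag_{\o}\mathscr B_{\o}={\rm id}_{\H}$, so one may write $T=\mathscr B^\dag_{\o}\mathscr B_{\o}\,T\,\mathscr B^\dag_{\o}\mathscr B_{\o}$ and obtain the kernel of the right-hand side by composing the (already known) kernels of the factors. More transparently for a sketch, I would test $T$ against two Schwartz vectors. Since $T$ is regularizing, $K_T\in\S(\G\times\G)$ and one has, for $u,v\in\S(\G)$,
\begin{equation*}
\<Tu,v\>=\int_\G\int_\G K_T(x,y)\,u(y)\,\overline{v(x)}\,dy\,dx\,,
\end{equation*}
so it suffices to match the left-hand bracket with the claimed double integral.

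Next I would apply \eqref{invform} to both arguments, writing $u=\int_\Xi\<u,\o_\Z\>\o_\Z\,d\Z$ and $v=\int_\Xi\<v,\o_{\Z'}\>\o_{\Z'}\,d\Z'$, substitute these into $\<Tu,v\>$ and expand by linearity in the first slot and conjugate-linearity in the second. This produces
\begin{equation*}
\<Tu,v\>=\int_\Xi\int_\Xi\<u,\o_\Z\>\,\overline{\<v,\o_{\Z'}\>}\,\<T\o_\Z,\o_{\Z'}\>\,d\Z\,d\Z'\,,
\end{equation*}
where the inner bracket is exactly $\big[{\rm cov}_\o(T)\big](\Z,\Z')$ by Definition \ref{tranteste}. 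Rewriting the remaining scalar products as integrals, $\<u,\o_\Z\>=\int_\G u(y)\overline{\o_\Z(y)}\,dy$ and $\overline{\<v,\o_{\Z'}\>}=\int_\G\overline{v(x)}\,\o_{\Z'}(x)\,dx$, and interchanging the order of integration, one reads off
\begin{equation*}
K_T(x,y)=\int_\Xi\int_\Xi\big[{\rm cov}_\o(T)\big](\Z,\Z')\,\o_{\Z'}(x)\,\overline{\o_\Z(y)}\,d\Z\,d\Z'\,,
\end{equation*}
which is the assertion.

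The main obstacle is purely one of justification: one must legitimate pulling $T$ inside the $\Z$-integration and applying Fubini to the resulting fourfold integral over $\Xi\times\Xi\times\G\times\G$, and here the regularizing hypothesis is essential. Because $\o\in\S(\G)$, the coherent states $\o_\Z={\sf W}(\Z)^*\o$ lie in $\S(\G)$ with seminorms controlled uniformly in $\Z$, and ${\rm cov}_\o(T)$ is smooth and bounded; what is really needed are decay estimates in $\Z,\Z'$ guaranteeing absolute convergence. The cleanest way to package these convergences is the operator-theoretic route indicated above: by the continuity of $\mathscr B_{\o}$ and $\mathscr B^\dag_{\o}$ on the relevant Schwartz and $L^2$ spaces (Lemma \ref{startortog} together with the mapping properties around \eqref{invform}), the composition $\mathscr B^\dag_{\o}\,(\mathscr B_{\o}T\mathscr B^\dag_{\o})\,\mathscr B_{\o}$ has a well-defined kernel, and bookkeeping of the three kernels $\o_{\Z'}(x)$, $\big[{\rm cov}_\o(T)\big](\Z,\Z')$ and $\overline{\o_\Z(y)}$ yields the stated formula without any formal manipulation.
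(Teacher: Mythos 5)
Your proof is correct and takes essentially the same route as the paper's: both insert the coherent-state resolution of the identity \eqref{invform} twice around $T$, identify the resulting matrix element $\big\<T\o_\Z,\o_{\Z'}\big\>$ as $\big[{\rm cov}_\o(T)\big](\Z,\Z')$, and read off the kernel. The only (cosmetic) difference is that you work weakly, expanding $u$ and $v$ in the pairing $\<Tu,v\>$, whereas the paper computes $(Tu)(x)$ pointwise, expanding $Tu$ and then $T^*\o_{\Z'}$.
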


\begin{proof}
Computing for $u\in\S(\G)$ (for instance),  we are going to use the inversion formula twice:
$$
\begin{aligned}
(Tu)(x)&=\int_\Xi\big\<Tu,\o_{\Z'}\big\>\,\o_{\Z'}(x)\,d\Z'\\
&=\int_\Xi\big\<u,T^*\o_{\Z'}\big\>\,\o_{\Z'}(x)\,d\Z'\\
&=\int_\Xi\Big\<u,\int_\Xi\<T^*\o_{\Z'},\o_\Z\big\>\,\o_\Z\,d\Z\Big\>\,\o_{\Z'}(x)\,d\Z'\\
&=\int_\Xi\int_\Xi\,\overline{\<T^*\o_{\Z'},\o_\Z\big\>}\big\<u,\o_\Z\big\>\,\o_{\Z'}(x)\,d\Z d\Z'\\
&=\int_{\G}\int_\Xi\int_\Xi\big\<T\o_\Z,\o_{\Z'}\big\>\,\o_{\Z'}(x)\,\overline{\o_\Z(y)}\,u(y)dy\,d\Z\,d\Z'\\
&=\int_{\G}K_{T}(x,y)u(y)dy=\big[{\sf Int}(K_T)u\big](x)\,.
\end{aligned}
$$
\end{proof}

\section{Connection with pseudo-differential operators}\label{tronol}

One defines {\it the pseudo-differential operator with symbol $a:\G\times\g^\sharp\to\mathbb C$} by the 
formula
\begin{equation}\label{dulcisor}
\big[{\sf Op}(a)u\big]\!(x):=\int_\G\int_{\g^\sharp}\!e^{i\<\log(xy^{-1})\mid \xi\>}a(x,\xi)u(y)\,dyd\xi\,.
\end{equation}
It is an integral operator with kernel ${\sf K}_a(x,y):=\int_{\g^\sharp}\!e^{i\<\log(xy^{-1})\mid \xi\>}a(x,\xi)d\xi$\,.
The structure of this kernel (obtained from the symbol $a$ by a partial Fourier transform and a change of
 variables) allows various types of interpretation of the formula \eqref{dulcisor} and leads to the 
properties of the quantization $\sf Op$\,, that we do not discuss here in detail. Examining this kernel, 
one sees for instance that \eqref{dulcisor} defines a unitary mapping ${\sf Op}:L^2\big(\G\times\g^\sharp\big)\rightarrow\mathbb B^2\!\[L^2(\G)\]$. Versions involving Schwartz spaces are also easy to obtain. Note that the Weyl system \eqref{friedar} can be recuperated as 
\begin{equation*}\label{newaige}
{\sf W}(z,\zeta)={\sf Op}(\epsilon_{z,\zeta})\,,\quad{\rm where}\quad\epsilon_{z,\zeta}(x,\xi):=e^{i\<\log x\mid \zeta\>}e^{-i\<\log z\mid \xi\>}
\end{equation*}
(see also \eqref{epsilon}) and that the Fourier-Wigner transform \eqref{vigner} may also be involved in 
the definition of $\sf Op$\,. If $a$ only depends on $x$ then $\sf Op$ is a multiplication operator, 
while if $a$ only depends on $\xi$\,, $\sf Op$ becomes a (left) convolution operator. 

\begin{Proposition}\label{traznaye}
Suppose (say) that $f\in\S(\G\times\g^\sharp)$\,. The Berezin operator ${\sf Ber}_{\o}(f)$ is a 
pseudo-differential operator with symbol
\begin{equation}\label{ghent}
\begin{aligned}
\big[a_\o(f)\big](x,\xi):=\int_\G\int_\G\int_{\g^\sharp}& e^{-i\<\log y\mid\xi\>}\,e^{i\<\log(zy^{-1}x)-\log(zx)\mid\zeta\>}\\
&f(z,\zeta)\,\o(zx)\,\overline{\o(zy^{-1}x)}\,dy dz d\zeta\,.
\end{aligned}
\end{equation}
\end{Proposition}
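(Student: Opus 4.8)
The plan is to prove the identity at the level of Schwartz kernels, exploiting the fact that $\sf Op$ is, by construction, a partial Fourier transform followed by a change of variables (as noted after \eqref{dulcisor}), hence a bijection between Schwartz symbols and regularizing kernels. So it suffices to compute the kernel of ${\sf Ber}_\o(f)$ and then apply the inverse of the symbol-to-kernel map; the resulting symbol must then coincide with $a_\o(f)$.

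First I would compute the kernel of ${\sf Ber}_\o(f)$. Since ${\sf Ber}_\o(f)=\int_\Xi f(\Z)\,\O_\Z\,d\Z$ by \eqref{albina}, and each $\O_{z,\zeta}$ is by \eqref{rankone} the rank-one projector with kernel $\varpi_{z,\zeta}=\o_{z,\zeta}\otimes\overline{\o_{z,\zeta}}$, inserting the explicit coherent state $\o_{z,\zeta}(x)=e^{-i\<\log(zx)\mid\zeta\>}\o(zx)$ gives
$$
K(x,y)=\int_\G\int_{\g^\sharp}f(z,\zeta)\,e^{i\<\log(zy)-\log(zx)\mid\zeta\>}\,\o(zx)\,\overline{\o(zy)}\,dz\,d\zeta .
$$
For $f\in\S(\G\times\g^\sharp)$ and $\o\in\S(\G)$ this integral converges absolutely and $K\in\S(\G\times\G)$, so passing from the operator \eqref{albina} to its kernel is legitimate.

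Next I would invert the symbol-to-kernel correspondence of $\sf Op$. From \eqref{dulcisor} the kernel of ${\sf Op}(a)$ is ${\sf K}_a(x,y)=\int_{\g^\sharp}e^{i\<\log(xy^{-1})\mid\xi\>}a(x,\xi)\,d\xi$. Holding $x$ fixed and writing $y=w^{-1}x$, so that $xy^{-1}=w$, this says precisely that $w\mapsto{\sf K}_a(x,w^{-1}x)$ equals $\mathscr F^{-1}\!\big(a(x,\cdot)\big)$; applying $\mathscr F$ and using $\mathscr F\mathscr F^{-1}={\rm id}$ recovers
$$
a(x,\xi)=\int_\G e^{-i\<\log w\mid\xi\>}\,{\sf K}_a\big(x,w^{-1}x\big)\,dw .
$$
This is the only step where Fourier-analytic content enters, and on Schwartz data it is rigorous; note there is no Jacobian to track, since $w$ merely relabels the second argument of the kernel rather than serving as a genuine integration variable of a substitution.

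Finally I would substitute the kernel $K$ of the first step for ${\sf K}_a$ in this inversion formula, that is, evaluate it at $y=w^{-1}x$, which replaces $zy$ by $zw^{-1}x$ throughout and yields
$$
[a_\o(f)](x,\xi)=\int_\G\int_\G\int_{\g^\sharp}e^{-i\<\log w\mid\xi\>}\,e^{i\<\log(zw^{-1}x)-\log(zx)\mid\zeta\>}\,f(z,\zeta)\,\o(zx)\,\overline{\o(zw^{-1}x)}\,dw\,dz\,d\zeta,
$$
which is exactly \eqref{ghent} after renaming the dummy variable $w$ to $y$. I do not expect any genuine obstacle here: the argument is essentially bookkeeping, and its only delicate points are (i) keeping the noncommutative group arguments $zy$ and $zw^{-1}x$ in the correct order, and (ii) justifying Fubini together with the Fourier inversion — both automatic once $f$ and $\o$ are Schwartz, since then $K\in\S(\G\times\G)$ and $\sf Op$ restricts to a bijection between Schwartz symbols and regularizing kernels.
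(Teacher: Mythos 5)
Your proposal is correct and takes essentially the same route as the paper: compute the Schwartz kernel of ${\sf Ber}_\o(f)$, invert the symbol-to-kernel correspondence of ${\sf Op}$ through the recovery formula $a(x,\xi)=\int_\G e^{-i\<\log y\mid\xi\>}\,{\sf K}_a\big(x,y^{-1}x\big)\,dy$ (the paper's \eqref{recup}), and substitute one into the other. The only difference is one of detail: you justify the recovery formula and the Berezin kernel explicitly, whereas the paper states both without derivation.
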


\begin{Remark}\label{eren}
In the Abelian case $\G=\R^n$ \eqref{ghent} simply reduces to a convolution:
\begin{equation*}\label{ghiont}
\begin{aligned}
\big[a_\o(f)\big](x,\xi)&=\int_{\R^n}\!\int_{\R^n}\!\int_{\R^n} f(z,\zeta)\,e^{-i\<y\mid\zeta+\xi\>}\o(z+x)\,\overline{\o(z-y+x)}\,dy dz d\zeta\\
&=\int_{\R^n}\!\int_{\R^n}\Big[\int_{\R^n} \,e^{-i\<y\mid\eta\>}\o(s)\,\overline{\o(s-y)}\,dy\Big]f(s-x,\eta-\xi)ds d\eta\,.
\end{aligned}
\end{equation*}
\end{Remark}

\begin{proof}
As said above, ${\sf Op}(a)$ is an integral operator with kernel $\,{\sf K}_{a}:\G\times\G\rightarrow\mathbb C$ given by
\begin{equation*}\label{acrisor}
{\sf K}_{a}(x,y)=\int_{\g^\sharp}\!e^{i\<\log(xy^{-1})\mid \xi\>}a(x,\xi)d\xi=\big[\big({\rm id}\otimes\mathscr F^{-1}\big)a\big]\big(x,xy^{-1}\big)\,.
\end{equation*}
The symbol may be recovered from the kernel by means of the formula
\begin{equation}\label{recup}
a(x,\xi)=\int_\G e^{-i\<\log y\mid\xi\>}{\sf K}_a\big(x,y^{-1}x\big)dy\,.
\end{equation}
On the other hand, a short computation shows that ${\sf Ber}_\o(f)$ is an integral operator in $L^2(\G)$ with kernel
\begin{equation*}\label{thekernel}
\kappa_\o(f):=\int_\G\int_{\g^\sharp}f(z,\zeta)\,\o_{z,\zeta}\!\otimes\overline{\o_{z,\zeta}}\,dzd\zeta\,.
\end{equation*}
Hence one will have ${\sf Ber}_\o(f)={\sf Op}\big[a_\o(f)\big]$ if and only if 
\begin{equation*}\label{iff}
\begin{aligned}
\big[a_\o(f)\big](x,\xi)&=\int_\G\int_\G\int_{\g^\sharp} e^{-i\<\log y\mid\xi\>}f(z,\zeta)\,\o_{z,\zeta}(x)\,\overline{\o_{z,\zeta}(y^{-1}x)}\,dy dz d\zeta\\
&=\int_\G\int_\G\int_{\g^\sharp} e^{-i\<\log y\mid\xi\>}f(z,\zeta)\,e^{-i\<\log(zx)\mid\zeta\>}\o(zx)\\
&\qquad\qquad\quad e^{i\<\log(zy^{-1}x)\mid\zeta\>}\,\overline{\o(zy^{-1}x)}\,dy dz d\zeta\,.
\end{aligned}
\end{equation*}
\end{proof}

\begin{Remark}\label{barra}
{\rm We can compute the pseudo-differential symbol of the (regularizing) operator $T$ in terms of the 
covariant symbol and the coherent states, using Proposition \ref{nucleuldansei} and formula \eqref{recup}. This means, at least formally, that $\,T={\sf Op}\big(a_T\big)$\,, with
\begin{equation*}\label{recupe}
\begin{aligned}
a_T(x,\xi)\!&=\!\int_\G e^{-i\<\log y\mid\xi\>}K_T\big(x,y^{-1}x\big)dy\\
&=\!\int_\G\int_\G\int_{\g^\sharp}\int_\G\int_{\g^\sharp}e^{-i\<\log y\mid\xi\>}\big[{\rm cov}_\o(T)\big](z,\zeta;z'\!,\zeta')\\
&\qquad\qquad\quad\qquad\quad\o_{z'\!,\zeta'}(x)\,\overline{\o_{z,\zeta}\big(y^{-1}x\big)}\,dy\,dzd\zeta\,dz'\!d\zeta'.
\end{aligned}
\end{equation*}}
\end{Remark}

\section{Other versions}\label{firroskos}

\subsection{{$\tau$}-quantizations}\label{firro}

Let $\tau:\G\to\G$ be any continuous map, that does not need to be a group morphism or to commute with 
inversion. The model is $x\to\tau x$ with $\tau\in[0,1]$ from the Abelian case $\G=\R^n$, but even in 
this simple case one can master much more than scalar transformations. In \cite{MR} such a parameter has
 been used in the global quantization involving the unitary dual $\wG$ of the group. In a final section, for nilpotent groups, it also appeared involved in generalizing 
the quantization \eqref{dulcisor} of symbols on $\Xi=\G\times\g^\sharp$, that may be replaced with
\begin{equation}\label{dulcisorel}
\big[{\sf Op}^\tau\!(a)u\big]\!(x)=\int_\G\int_{\g^\sharp}\!e^{i\<\log(y^{-1}x)\mid \xi\>}a\big(\tau(xy^{-1})^{-1}x,\xi\big)u(y)\,dyd\xi\,.
\end{equation}
Among other results, one can show the formula for the adjoint
\begin{equation*}\label{pidos}
{\sf Op}^\tau\!(a)^*={\sf Op}^{\tilde\tau}\!(\overline a)\,,\quad{\rm where}\quad\tilde\tau(x):=\tau(x^{-1})x\,.
\end{equation*}
Note that $\tau(\cdot)=\e$ corresponds to the identity map $\tilde\tau(x)=x$\,, switching from the left 
to the right quantization, and vice versa. (For the right quantization $a(y,\xi)$ appears in 
\eqref{dulcisorel}). Thus the Hilbert space adjoint corresponds to complex conjugation of symbols if and 
only if $\tau=\tilde\tau$. If $\G=\R^n$ (with addition) the number $\tau=1/2$ solves this and corresponds 
to the Weyl quantization.

\smallskip
In \cite[Sect.\,4]{MR} the existence problem of such a symmetric parameter $\tau$ has been tackled for 
very general groups. In particular, a natural solution has been found for our nilpotent case, based on 
the vector structure of the Lie algebra on the fact that the group and the Lie algebra are diffeomorphic.
 Explicitly, one sets
\begin{equation*}\label{simetraca}
\tau(x):=\int_0^1\exp[s\log x]ds\,.
\end{equation*}

Keeping $\tau$ arbitrary, we briefly (and formally) indicate in the sequel some of the modifications 
needed in the present paper to accommodate the quantization parameter $\tau$.

\smallskip
Instead of \eqref{friedar}, one can start with the family of unitary operators in $L^2(\G)$
\begin{equation*}\label{taulife}
\big[{\sf W}^\tau\!(z,\zeta)u\big](x):=e^{i\<\log[\tau(z)^{-1}x]\mid\zeta\>}u(z^{-1}x)\,,\quad(z,\zeta)\in\G\times\g^\sharp,
\end{equation*}
coinciding with those from \eqref{friedar} if $\tau(\cdot)\!:=\e$\,. We recall the formula 
${\sf W}(z,\zeta)\equiv{\sf W}^{\e}(z,\zeta)={\sf M}_\zeta{\sf L}_z$ (multiplications are placed to the left).
 For $\tau={\rm id}$ one has the opposite ordering  ${\sf W}^{\rm id}(z,\zeta)={\sf L}_z{\sf M}_\zeta$\,.

\smallskip
Then the $\tau$-Fourier-Wigner transform will be
\begin{equation*}\label{unica}
\begin{aligned}
\mathscr W^\tau_{u,v}(z,\zeta)\!:=\big\<{\sf W}^\tau\!(z,\zeta)u\!\mid\!v\big\>&=\int_\G e^{i\<\log[\tau(z)^{-1}y]\mid\zeta\>}u\big(z^{-1}y\big)\overline{v(y)}dy\\
&=\int_\G e^{i\<\log x\mid\zeta\>}u(z^{-1}\tau(z)x)\overline{v(\tau(z)x)}dx\,.
\end{aligned}
\end{equation*}
It consists of a partial Fourier transformation composed with a $\tau$-depending change of variable.

\smallskip
Computing the adjoint of ${\sf W}^\tau\!(z,\zeta)$ leads to coherent states built upon $\o\in\S(\G)$  and 
depending on $\tau$\,:
\begin{equation*}\label{noicoieri}
\o^\tau_{z,\zeta}(x):=\big[{\sf W}^\tau\!(z,\zeta)^*\o\big](x)=e^{-i\<\log[\tau(z)^{-1}zx]\mid\zeta\>}\o(zx)\,.
\end{equation*}
This releases a sequence of $\tau$-analogs of many of the notions and formulas above, with similar 
properties. For instance, a (slightly formal) expression for the $\tau$-Berezin quantization is
\begin{equation*}\label{tauberezin}
\begin{aligned}
&\big[{\sf Ber}^\tau_\o(f)u\big](x)\\
&=\int_\G\int_\G\int_{\g^\sharp}e^{i\<\log[\tau(z)^{-1}zy]-\log[\tau(z)^{-1}zx]\mid\zeta\>}f(z,\zeta)\,\overline{\o(zy)}\,\o(zx)\,u(y)\,dydzd\zeta\,.
\end{aligned}
\end{equation*}

The reader may formulate other results in the setting of the $\tau$-Berezin quantization. We only 
indicate another covariance result, valid for $\tau(x)=x$\,, that is different from 
Proposition \ref{crooks} (in a very non-commutative setting ordering issues do matter if one wants 
simple formulas).

\begin{Proposition}\label{croks}
For any $\zeta\in\g^\sharp$ and $f\in L^\infty(\G\times\g^\sharp)$\,, one has
\begin{equation*}\label{curvar}
{\sf M}_\zeta^*{\sf Ber}^{\rm id}_\o(f){\sf M}_\zeta={\sf Ber}^{\rm id}_\o\big[f(\cdot,\cdot-\zeta)\big]\,.
\end{equation*}
\end{Proposition}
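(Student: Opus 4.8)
The plan is to prove the identity ${\sf M}_\zeta^*{\sf Ber}^{\rm id}_\o(f){\sf M}_\zeta={\sf Ber}^{\rm id}_\o\big[f(\cdot,\cdot-\zeta)\big]$ by a weak (matrix-element) computation, exactly mirroring the proof of Proposition \ref{crooks}, but now tracking how conjugation by the multiplication operator ${\sf M}_\zeta$ interacts with the $\tau={\rm id}$ coherent states. Since the factor ${\sf M}_\zeta$ acts by multiplication by $\varepsilon_\zeta$ (see \eqref{epsilon}), which commutes with the point-wise data but shifts the frequency variable, the expected effect is a translation of $f$ in its second ($\g^\sharp$) argument by $\zeta$, which is what the right-hand side asserts.

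First I would write out the relevant building blocks explicitly. For $\tau={\rm id}$ the Weyl system is ${\sf W}^{\rm id}(z,\sigma)={\sf L}_z{\sf M}_\sigma$, so that the coherent state is $\o^{\rm id}_{z,\sigma}=\big({\sf L}_z{\sf M}_\sigma\big)^*\o={\sf M}_\sigma^*{\sf L}_z^*\o$. The key algebraic fact I will exploit is that two multiplication operators commute, ${\sf M}_\zeta^*{\sf M}_\sigma^*={\sf M}_\sigma^*{\sf M}_\zeta^*$, while ${\sf M}_\zeta^*$ and ${\sf L}_z^*$ do \emph{not} commute freely; instead the multiplication relation ${\sf L}_z{\sf M}_\zeta$ from the list after \eqref{maigrele} produces a position-dependent phase. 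The plan is therefore to pass ${\sf M}_\zeta^*$ through ${\sf W}^{\rm id}(z,\sigma)^*$ and absorb the resulting phase into a shift of the frequency label $\sigma\mapsto\sigma+\zeta$ (or $\sigma-\zeta$, depending on the orientation of conjugation).

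Concretely, I would start from the weak definition \eqref{viespe}, written for ${\sf Ber}^{\rm id}_\o$ in terms of the rank-one projectors $\O^{\rm id}_{z,\sigma}$, and compute
\begin{equation*}
\big\<{\sf M}_\zeta^*{\sf Ber}^{\rm id}_\o(f){\sf M}_\zeta\,u,v\big\>=\int_\G\int_{\g^\sharp}f(z,\sigma)\,\big\<\O^{\rm id}_{z,\sigma}({\sf M}_\zeta u),{\sf M}_\zeta v\big\>\,dzd\sigma\,.
\end{equation*}
The inner matrix element expands as $\big\<{\sf M}_\zeta u,\o^{\rm id}_{z,\sigma}\big\>\big\<\o^{\rm id}_{z,\sigma},{\sf M}_\zeta v\big\>=\big\<u,{\sf M}_\zeta^*\o^{\rm id}_{z,\sigma}\big\>\big\<{\sf M}_\zeta^*\o^{\rm id}_{z,\sigma},v\big\>$, so everything reduces to identifying ${\sf M}_\zeta^*\o^{\rm id}_{z,\sigma}$. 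Using $\o^{\rm id}_{z,\sigma}={\sf M}_\sigma^*{\sf L}_z^*\o$ and the commutativity of multiplications, I get ${\sf M}_\zeta^*{\sf M}_\sigma^*{\sf L}_z^*\o={\sf M}_{\sigma+\zeta}^*{\sf L}_z^*\o=\o^{\rm id}_{z,\sigma+\zeta}$ — here I am using ${\sf M}_\eta{\sf M}_\zeta={\sf M}_{\eta+\zeta}$ from the multiplication relations. Substituting this back gives the integrand $f(z,\sigma)\,\big\<u,\o^{\rm id}_{z,\sigma+\zeta}\big\>\big\<\o^{\rm id}_{z,\sigma+\zeta},v\big\>$, and a change of variable $\sigma\mapsto\sigma-\zeta$ in the $\g^\sharp$-integration converts $f(z,\sigma)$ into $f(z,\sigma-\zeta)$ multiplying $\big\<\O^{\rm id}_{z,\sigma}u,v\big\>$, which is exactly the weak form of ${\sf Ber}^{\rm id}_\o\big[f(\cdot,\cdot-\zeta)\big]$.

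The main obstacle, and the only point requiring genuine care, is the clean commutation ${\sf M}_\zeta^*{\sf M}_\sigma^*={\sf M}_{\sigma+\zeta}^*$: this is precisely why the proposition is stated for $\tau={\rm id}$ rather than $\tau=\e$. In the $\tau=\e$ convention of \eqref{friedar} the Weyl operator is ${\sf M}_\sigma{\sf L}_z$ with the multiplication to the \emph{left} of the translation, so conjugating by ${\sf M}_\zeta$ would force ${\sf M}_\zeta^*$ past ${\sf L}_z$ and generate the awkward position-dependent phase $e^{i\<\log(z^{-1}\cdot)-\log(\cdot)\mid\zeta\>}$ recorded in the multiplication relations, which would \emph{not} collapse to a simple shift of $f$. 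With the $\tau={\rm id}$ ordering ${\sf L}_z{\sf M}_\sigma$ the translation sits to the left, the two multiplications meet adjacently, and the shift goes through cleanly — this is the content of the parenthetical remark that ``ordering issues do matter.'' I would therefore emphasize in the write-up that the whole computation hinges on the placement of the multiplication factor in ${\sf W}^{\rm id}$, and verify that no spurious $z$-dependent phase survives.
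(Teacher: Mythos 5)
Your proof is correct and follows essentially the same route as the paper's: both hinge on the identity ${\sf M}_\zeta^*{\sf W}^{\rm id}(z,\sigma)^*={\sf M}_\zeta^*{\sf M}_\sigma^*{\sf L}_z^*={\sf M}_{\sigma+\zeta}^*{\sf L}_z^*={\sf W}^{\rm id}(z,\sigma+\zeta)^*$, hence ${\sf M}_\zeta^*\,\O^{\rm id}_{z,\sigma}\,{\sf M}_\zeta=\O^{\rm id}_{z,\sigma+\zeta}$, followed by the change of variables $\sigma\mapsto\sigma-\zeta$; the only cosmetic difference is that you work with matrix elements $\big\<\cdot\,u,v\big\>$ while the paper applies the operators directly to $u$. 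Your closing observation about why the $\tau={\rm id}$ ordering is essential (for $\tau=\e$ the factor ${\sf M}_\zeta^*$ would have to cross ${\sf L}_z^*$ and produce a $z$-dependent phase) accurately captures the paper's parenthetical remark on ordering.
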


\begin{proof}
Using notations from section \ref{firica}, one has
$$
\begin{aligned}
{\sf M}_\zeta^*\,{\sf Ber}^{\rm id}_\o(f){\sf M}_\zeta u&=\int_\G\int_{\g^\sharp}f(x,\xi)\,{\sf M}_\zeta^*\,\O^{\rm id}_{x,\xi}{\sf M}_\zeta u\,dxd\xi\\
&=\int_\G\int_{\g^\sharp}f(x,\xi)\,\big\<u,{\sf M}_\zeta^*{\sf W}^{\rm id}(x,\xi)^*\o\big\>{\sf M}_\zeta^*{\sf W}^{\rm id}(x,\xi)^*\o\,dxd\xi\\
&=\int_\G\int_{\g^\sharp}f(x,\xi)\,\big\<u,{\sf M}_\zeta^*{\sf M}^*_\xi{\sf L}^*_x\o\big\>{\sf M}_\zeta^*{\sf M}^*_\xi{\sf L}^*_x\o\,dxd\xi\\
&=\int_\G\int_{\g^\sharp}f(x,\xi)\,\big\<u,{\sf M}^*_{\zeta+\xi}{\sf L}^*_x\o\big\>{\sf M}_{\zeta+\xi}^*{\sf L}^*_x\o\,dxd\xi\\
&=\int_\G\int_{\g^\sharp}f(x,\xi)\,\big\<u,{\sf W}^{\rm id}(x,\zeta+\xi)^*\o\big\>{\sf W}^{\rm id}(x,\zeta+\xi)^*\o\,dxd\xi\\
&=\int_\G\int_{\g^\sharp}f(x,\xi)\,\O^{\rm id}_{x,\zeta+\xi}u\,dxd\xi\\
&=\int_\G\int_{\g^\sharp}f(x,\eta-\zeta)\,\O^{\rm id}_{x,\eta}(u)\,dxd\eta\,.
\end{aligned}
$$
\end{proof}

\subsection{Magnetic quantization}\label{firrak}

In the same setting of a connected simply connected nilpotent group $\G$\,, we consider 
{\it a magnetic field} $B$\,, i.\,e.\,a closed $2$-form on $\G$\,. It can be written as $B=dA$ for 
some $1$-form ({\it vector potential}). Any other vector potential $\tilde A$ satisfying $B=d\tilde A$ 
is related to the first by $\tilde A=A+d\psi$\,, where $\psi$ is a smooth function on $\G$\,; it would
 lead to a unitarily equivalent formalism (gauge covariance).

\smallskip
For $x,y\in\G$ one defines the smooth function $[x,y]:\mathbb R\rightarrow\G$ by
\begin{equation*}\label{abb}
[x,y]_s:=\exp[(1-s)\log x+s\log y]=\exp[\log x+s(\log y-\log x)]\,.
\end{equation*}
Its range $[[x,y]]:=\big\{\,[x,y]_s\!\mid\!s\in[0,1]\,\big\}$ is the segment in $\G$ connecting 
$x$ to $y$\,. The circulation of the $1$-form $A$ through the segment $[[x,y]]$ is 
\begin{equation*}\label{circ}
\Gamma^A[[x,y]]\equiv\int_{[[x,y]]}\!A:=\int_0^1\!\big\<\log y-\log x\,\big\vert\, A\big([x,y]_s\big)\big\>\,ds\,.
\end{equation*}
This leads to the following magnetic modification of the quantization \eqref{dulcisor}
$$
[{\sf Op}^{A}(a)u](x)=\int_{\G}\int_{\mathfrak g^\sharp} e^{i\int_{[[x,y]]}\!A}\,e^{i\<\log(xy^{-1})\mid\xi\>}a\big(x,\xi\big)u(y)\,dyd\xi\,,
$$
that has been introduced in \cite[Sect.\,4]{BM}. One finds in \cite{BM} more general constructions, 
consisting in twisting by $2$-cocycles pseudo-differential formalisms attached to type I unimodular 
locally compact groups. The Abelian case $\G=\R^n$ is deeply studied in \cite{IMP1,IMP2,MPR}, mainly 
in connection with magnetic Schr\"odinger operators.

\smallskip
At a basic level, the key modification is to replace the left regular representation ${\sf L}:\G\to\mathbb B[L^2(\G)]$ with {\it the family of left magnetic translations}
\begin{equation}\label{leftmag}
\big[{\sf L}^A_z(u)\big](x):=e^{i\int_{[[x,z^{-1}x]]}\!A}\,u\big(z^{-1}x\big)\,.
\end{equation}
They do not even form a projective representation. By using Stokes' Theorem one checks that
\begin{equation}\label{magnetoform}
{\sf L}^A_y{\sf L}^A_z=\O^B(y,z){\sf L}^A_{yz}\,,\quad\forall\,y,z\in\G\,,
\end{equation}
with $\O^B(y,z)$ the operator of multiplication by the function $x\to e^{\Gamma^B(x;y,z)}$, 
where $\Gamma^B(x;y,z)$ is the flux of the magnetic field $B$ through the "triangle" in $\G$ with 
corners $x,y^{-1}x$ and $z^{-1}y^{-1}x$\,, defined by "segments" of the form $[\![a,b]\!]$ as defined 
above. So there is a magnetic contribution to the Canonical Commutation Relations. 

\smallskip
Consequently, one defines the family of unitary operators ${\sf W}^A(z,\zeta):={\sf M}_\zeta{\sf L}^A_z$ in $L^{2}(\G)$ ({\it the magnetic Weyl system}, labeled by $\Xi=\G\times\g^\sharp$) by
\begin{equation*}\label{vacio}
\big[{\sf W}^A(z,\zeta)u\big](x):=e^{i\<\log x\mid\zeta\>}e^{i\int_{[[x,z^{-1}x]]}\!A}\,u(z^{-1}x)\,.
\end{equation*}
This leads to {\it magnetic coherent states}
\begin{equation}\label{coermag}
\o^A_{z,\zeta}(x):=\big[{\sf W}^A(z,\zeta)^*\o\big](x)=e^{-i\<\log(zx)\mid \zeta\>}e^{-i\int_{[[zx,x]]}\!A}\,\o(zx)\,,
\end{equation}
and {\it the magnetic Fourier-Wigner transform}
\begin{equation}\label{viognier}
\mathscr W^A_{u,v}(z,\zeta):=\big\<{\sf W}^A(z,\zeta)u,v\big\>=\int_\G e^{i\<\log y\mid\zeta\>}e^{i\int_{[[y,z^{-1}y]]}\!A}\,u(z^{-1}y)\,\overline{v(y)}\,dy\,.
\end{equation}
The output is {\it a magnetic Berezin quantization}
\begin{equation*}\label{Aberezin}
\begin{aligned}
\big[{\sf Ber}^A_\o(f)u\big](x)&=\int_\G\int_\G\int_{\g^\sharp}e^{i\<\log(zy)-\log(zx)\mid\zeta\>}\exp\Big\{i\Big(\int_{[[zy,y]]}\!\!\!A\,-\int_{[[zx,x]]}\!\!\!A\Big)\Big\}\\
&\qquad\qquad\quad f(z,\zeta)\,\overline{\o(zy)}\,\o(zx)\,u(y)\,dydzd\zeta\,.
\end{aligned}
\end{equation*}

The reader can easily extend the results of the main body of this article to the magnetic case. 
The $\tau$-quantizations are also possible in this set up.



\begin{thebibliography}{00}

\bibitem{AAG} S. T. Ali, J-P. Antoine and J-P. Gazeau: \textit{Coherent States, Wavelets and Their Generalizations}, Springer-Verlag, New York, 2000.

\bibitem{BKG} H. Bahouri, C. Fermanian-Kammerer and I. Gallagher: \textit{Phase Space Analysis and Pseudodifferential Calculus on the Heisenberg Group}, Asterisque, {\bf 342}, (2012).

\bibitem{BM} H. Bustos and M. M\u antoiu: \textit{Twisted Pseudo-differential Operators on Type I Locally Compact Groups}, Illinois J. Math., \textbf{60}(2), 365--390, (2016).

\bibitem{BBR} P. Boggiatto, E. Buzano and L. Rodino: \textit{Global Hypoellipticity and Spectral Theory}, Akademie-Verlag, 1996.

\bibitem{BOW} P. Boggiatto, A. Oliaro and M.\.W. Wong: \textit{$L^p$ Boundedness and Compactness of Localization Operators}, J. Math. Anal. Appl. \textbf{322}, 193--206, (2006).

\bibitem{CR} M. Combescure and D. Robert: \textit{Coherent States and Applications in Mathematical Physics}, Springer Dordrecht Heidelberg London New York, 2012. 

\bibitem{CGr} E. Cordero and K. Gr\"ochenig: \textit{Time-Frequency Analysis of Gabor Localization Operators}, J. Funct. Anal. \textbf{205}(1), 107--131, (2003).

\bibitem{CT} E. Cordero and A. Tabacco: \textit{Localization Operators via Time-Frequency Analysis}, Operator Theory: Advances and Applications, \textbf{155}, 131--146, Birkh\"auser Verlag Basel, 2004.

\bibitem{CG} L. J. Corwin and F. P. Greenleaf: \textit{Representations of Nilpotent Lie Groups and Applications}, Cambridge Univ. Press, 1990.

\bibitem{FR1} V. Fischer and M.  Ruzhansky: \textit{Quantization on Nilpotent Lie Groups}, Progress in Mathematics, \textbf{314}, Birkh\"auser, 2016. 

\bibitem{Glo1}P. G\l owacki: \textit{A Symbolic Calculus and $L^2$-Boundedness on Nilpotent Lie Groups}, J. Funct. Anal. {\bf 206}, 233--251, (2004).

\bibitem{Glo2} P. G\l owacki: \textit{The Melin Calculus for General Homogeneous Groups,} Ark. Mat., {\bf 45}(1), 31--48, (2007).

\bibitem{Glo3} P. G\l owacki: \textit{Invertibility of Convolution Operators on Homogeneous Groups}, Rev. Mat. Iberoam. {\bf 28}(1), 141--156, (2012).

\bibitem{GMP} A. Grossmann, J. Morlet and T. Paul: \textit{Transforms Associated to Square Integrable Group Representations I: General Results}, J. Math. Phys. {\bf 26}, (1985).

\bibitem{Ha} B. C. Hall, \textit{Holomorphic Methods in Analysis and Mathematical Physics}, Contemp. Math. \textbf{260}, 1--59 (2000).

\bibitem{IMP1} V. Iftimie, M. M\u antoiu and R. Purice: \textit{Magnetic Pseudodifferential Operators}, Publ. RIMS. {\bf 43}, 585--623, (2007).

\bibitem{IMP2} V. Iftimie, M. M\u antoiu and R. Purice: \textit{Commutator Criteria for Magnetic Pseudodifferential Operators}, Commun. PDE {\bf 35}, 1058--1094, (2010).

\bibitem{Ma1} D. Manchon: \textit{Formule de Weyl pour les groupes de Lie nilpotentes}, J. Reine Angew. Mat. {\bf 418}, 77--129, (1991).

\bibitem{Ma2} D. Manchon: \textit{Calcul symbolyque sur les groupes de Lie nilpotentes et applications}, J. Funct. Anal. {\bf 102} (2), 206--251, (1991).

\bibitem{M} M. M\u antoiu: \textit{A Positive Quantization on Type $I$ Locally Compact Groups}, Mathematische Nachrichten, in press.

\bibitem{MPR} M. M\u antoiu, R. Purice and S. Richard, \textit{Spectral and Propagation Results for Magnetic Schr\"odinger Operators; a $C^*$-Algebraic Framework}, J. Funct. Anal. {\bf 250}, 42--67, (2007).

\bibitem{MR} M. M\u antoiu and M. Ruzhansky: \textit{Pseudo-differential Operators, Wigner Transform and Weyl Systems on Type I Locally Compact Groups}, Doc. Math., {\bf 22 }, 1539--1592, (2017).

\bibitem{MR1} M. M\u antoiu and M. Ruzhansky: \textit{Quantizations on Nilpotent Lie Groups and Algebras Having Flat Coadjoint Orbits}, J. Geometric Analysis.

\bibitem{Me} A. Melin: \textit{Parametrix Constructions for Right Invariant Differential Operators on Nilpotent Groups}, Ann. Global Anal. Geom. {\bf 1}(1), 79--130, (1983).

\bibitem{Mi} K. Miller: \textit{Invariant Pseudodifferential Operators on Two Step Nilpotent Lie Groups}, Michigan Math. J. {\bf 29}, 315--328, (1982).

\bibitem{Mi1} K. Miller: \textit{Inverses and Parametrices for Right-Invariant Pseudodifferential Operators on Two-Step Nilpotent Lie Groups}, Trans. of the AMS, {\bf 280}(2), 721--736 (1983).

\bibitem{RT} M. Ruzhansky and V. Turunen: \textit{Pseudodifferential Operators and Symmetries}, Pseudo-Differential Operators: Theory and Applications {\bf 2}, Birkh\"auser Verlag, 2010. 

\bibitem{TB} J. Toft and P. Boggiatto: \emph{Schatten Classes for Toeplitz Operators with Hilbert Space Windows on Modulation Spaces}, Advances in Mathematics, \textbf{217}, 305--333, (2008). 

\bibitem{Wo} M.W. Wong: \emph{Wavelet Transforms and Localization Operators}, Basel; Boston; Berlin: Birkh\"auser, 2002.

\bibitem{Wo1} M. W. Wong: \emph{$L^p$ Boundedness of Localization Operators Associated to Left Regular Representations}, Proc. of the AMS, \textbf{130}(10), 2911--2919, (2002).

\end{thebibliography}
\end{document}